\newtheorem{theorem}{Theorem}
\newtheorem{lemma}[theorem]{Lemma}
\newtheorem{corollary}[theorem]{Corollary}
\newenvironment{customhyp}[1]
 {\innercustomthm}
 {\endinnercustomthm}
\theoremstyle{definition}
\newcommand{\Ebb}{\mathbb{E}}
\newcommand{\Nbb}{\mathbb{N}}
\newcommand{\Pbb}{\mathbb{P}}
\newcommand{\Rbb}{\mathbb{R}}
\newcommand{\Zbb}{\mathbb{Z}}
\newcommand{\Kbf}{\mathbf{K}}
\newcommand{\Lbf}{\mathbf{L}}
\newcommand{\Vbf}{\mathbf{V}}
\newcommand{\Fcal}{\mathcal{F}}
\newcommand{\Hcal}{\mathcal{H}}
\newcommand{\Mcal}{\mathcal{M}}
\newcommand{\Ocal}{\mathcal{O}}
\newcommand{\Qcal}{\mathcal{Q}}
\newcommand{\Xcal}{\mathcal{X}}
\newcommand{\dommes}{\mu}
\newcommand{\one}{{\mathbf{1}}}
\renewcommand{\geq}{\geqslant}
\renewcommand{\leq}{\leqslant}
\DeclareMathOperator{\argmin}{\text{arg min}}
\DeclareMathOperator{\argmax}{\text{arg max}}
\DeclareMathOperator{\crit}{crit}
\DeclareMathOperator{\DKL}{\text{KL}}
\DeclareMathOperator{\pen}{\text{pen}}
\newcommand*\bigcdot{\mathpalette\bigcdot@{.5}}
\newcommand*\bigcdot@[2]{\mathbin{\vcenter{\hbox{\scalebox{#2}{$\m@th#1\bullet$}}}}}
\newcommand{\luc}[1]{\textcolor{blue}{[#1]}}
\title{General oracle inequalities for a penalized log-likelihood criterion based on non-stationary data}
\date{}
\author{
 Julien Aubert*, Luc Lehéricy* and Patricia Reynaud-Bouret* \\
 {\small *Université Côte d'Azur, CNRS, LJAD, France}
}
\begin{document}

\maketitle

\begin{abstract}
We prove oracle inequalities for a penalized log-likelihood criterion that hold even if the data are not independent and not stationary, based on a martingale approach. The assumptions are checked for various contexts: density estimation with independent and identically distributed (i.i.d) data, 
hidden Markov models, spiking neural networks, adversarial bandits. In each case, we compare our results to the literature, showing that, although we lose some logarithmic factors in the most classical case (i.i.d.), these results are comparable or more general than the existing results in the more dependent cases.

\end{abstract}

Keywords : log-likelihood, penalization, oracle inequality, dependent data.

AMS Classification : 62G05, 62M05, 62M99

\section{Introduction}

Maximum likelihood estimator (MLE) (see \cite{bickel-doksum} and the references therein) is often considered a graal, even if it has been often debated w.r.t. robustness in particular \cite{baraud-birge}. In particular, in the i.i.d. setting, it is known to be asymptotically optimal (variance which is asymptotically minimal w.r.t. the Cramer-Rao bound) under mild conditions (typically differentiability of the density) \cite{Lehmann-book}. In 1973, \cite{akaike} proposed its famous penalized log-likelihood criterion (AIC) stating that to select in a finite set of models, one should penalize the log-likelihood of the model $m$ by $D_m$, the number of parameters describing $m$. There is a large variety of variants of AIC, for which the asymptotic properties are more or less precise (see \cite{Burnham} and the references therein).
Under additional differentiability assumptions, the Wilks phenomenon \cite{wilks} quantifies more precisely (in an asymptotic way) how the recentered maximal log-likelihood behaves as a chi-square distribution with $D_m$ degrees of freedom. This phenomenon allows to construct asymptotic likelihood ratio test and therefore to perform model selection by multiple testing \cite{zheng}. This idea has been used in many settings, even quite recently in combination with asymptotic $\ell_1$ model selection \cite{tang, Candes2019, Candes2019-court}. In short, with AIC-like penalties and the Wilks phenomenon, we have a quite clear asymptotic picture in the i.i.d. setting on how the maximum likelihood behaves in terms of dimension and how it has to be penalized to find the correct model in a finite fixed set of smooth enough models.

Moving on to the non-asymptotic setting, things are more difficult to study. It is possible to get concentration inequalities that mimic the Wilks phenomenon \cite{massart-wilk}. With this kind of non asymptotic results, it is possible to understand non asymptotically what the MLE does in an exponential family \cite{spok2012}, or how to penalize it to make it work even in infinite dimensional settings with finite effective dimension \cite{spok2017}. It is also possible to penalize the log-likelihood to perform model selection \cite{castellan, massart2007concentration} for particular family of models in the i.i.d. setting. However, in these works, model selection "à la Birgé-Massart" \cite{birge-massart} is based on a form of linearity between the contrast and the family of models under consideration, which leads to a very precise tuning of the penalty constants \cite{arlot} but which at the same time, prevents the results to be applied in more general settings.

If we drop the independence assumption but keep the stationarity of the data, there have been a vast variety of model selection results by minimization of $\ell_0$ or $\ell_1$ penalized contrasts with or without log-likelihood. Let us cite but a few: Markov chains \cite{lacourMarkov}, hidden Markov models \cite{lacourHMM, dCGL2016minimax, lehericy2021hmm_mle}, spiking neuronal networks with unobserved components \cite{OstRB}, point processes \cite{Hansen_2015}. Each time, the arguments are a combination of martingale approach and non asymptotic exponential inequalities, that derive from the ergodicity of the process and mixing properties.

However, the Akaike criterion (minus log-likelihood plus penalization proportional to $D_m$) is routinely used even beyond this setting \cite{daw2011trial, wilson2019ten}. In particular, in learning experiments where the individual has to learn how to perform a task, and learn it only once, data are neither independent nor stationary. Some authors \cite{ramponi2020inverse} have tried to assume that individuals are identically distributed, but this is a very strong and quite unlikely setting in practice. In a first work \cite{aubert23}, we proved meaningful bounds for the maximum likelihood estimator on an individual learning trajectory. However, up to our knowledge, there is no theoretical work on model selection in this setting or more generally for dependent and non stationary data.

The purpose of this work is to derive a non-asymptotic oracle inequality for AIC-like model selection 
that is general enough to cover all the setups listed before: i.i.d. samples, Markov chains, hidden Markov models, partially observed neuronal networks, learning models and more.

The proof relies on a martingale approach and an exponential inequality that was recently proved in \cite{aubert:hal-04526484} for a supremum of empirical centered processes that are stochastically normalized. This exponential inequality generalizes the works of \cite{baraud}, \cite{talagrand}, and is inspired by the works on renormalized martingales due to \cite{bercu2015concentration, bercu2019new} and \cite{de_la_Pe_a_2004}. This is the key non asymptotic exponential inequality in the present work.

The rest of the paper is organised as follows. In Section~\ref{sec:frame}, we introduce the notations and the general framework. In Section~\ref{sec:main}, we state our assumptions and oracle inequalities in probability and in expectation, in the bounded and unbounded frameworks. We also discuss precisely how the penalty in $D_m$ is obtained with respect to more classical proof techniques. In Section~\ref{sec:appli}, we discuss various cases that are covered by these general results in light of the existing results of the literature. The appendices are dedicated to the proofs.

\section{Framework and notation \label{sec:frame}}

Given two integers $a \leq b$ and a sequence $(x_s)_{s \in \Zbb}$, write $x_a^b = (x_a, \dots, x_b)$ (with $x_a^b$ being the empty sequence when $a > b$). For any two real numbers $x,y$, write $x \vee y$ their maximum and $x \wedge y$ their minimum. Let $\Nbb^*$ be the set of positive integers, and for any $n \in \Nbb^*$, write $[n] = \{1, \dots, n\}$. Finally, $\log$ denotes the natural logarithm.

Let $n \geq 2$. We observe a process $(X_t)_{1 \leq t \leq n}$ defined on a polish measure space $(\Xcal, \Fcal, \dommes)$ and adapted to a filtration $(\Fcal_t)_{1 \leq t \leq n}$. We write $\Pbb$ the corresponding probability and $\Ebb$ the corresponding expectation.

We are interested in the successive conditional distributions of $X_t$.
If $\Xcal$ is discrete and $\mu$ is the counting measure, we are therefore interested by the sequence
\begin{equation*}
 p^\star_t(.)=\Pbb(X_t=.|\Fcal_{t-1}), \quad \forall t\in [n].
\end{equation*}
More generally, in the sequel, for general measured spaces $(\Xcal, \Fcal, \mu)$, we always assume that the conditional density of $X_t$ given $\Fcal_{t-1}$ with respect to $\mu$ exists and we write it $p^\star_t(.)$. Note that for all $x \in \Xcal, (p^\star_t(x))_{1 \leq t \leq n}$ is therefore predictable with respect to the filtration--we write that $p^\star_t$ is predictable for short. We write $p^\star=(p^\star_t)_{t\in[n]}$ the vectors of all the successive conditional densities.

\subsection{Some examples}

The emblematic case is the case where we take for filtration $\Fcal_t=\sigma(X_1^t)$ for $t\geq 1$ and $\Fcal_0$ is the trivial $\sigma$-algebra. In this case, for $t>1$, we can rewrite $p_t^\star$ as the conditional density of $X_t$ given $X_1^{t-1}$ and $p_1^\star$ as the density of $X_1$ (in this case $p_1^\star$ is deterministic). To emphasize this fact, in this case we note $p_t^\star(.|X_1^{t-1})$ instead of just $p_t^\star$. Note that the density of the vector $X_1^n$ with respect to $\mu^{\otimes n}$ is therefore
\begin{equation}
\label{eq:dens_prod}
x_1^n\mapsto \prod_{i=1}^n p^\star_t(x_t|X_1^{t-1}=x_1^{t-1}).
\end{equation}

In the even simpler case that the coordinates of $X_1^n$ are independent, for all $t\geq 1$, $p^\star_t$ is the (deterministic)
density of $X_t$ w.r.t. to $\mu$.

We might be interested by an enlargement of the filtration. For instance, the conditional distribution of $X_t$ can be a function of not only the past realizations of $X_t$, but also of additional covariates: imagine that each step $t$, the distribution of $X_t$ depends on the past but also on an observed variable $C_t$, so that the conditional densities can be written
\begin{equation}\label{eq:dens_prod_contexte}
 p^\star_t(x_t) = p^\star_t(x_t | X_1^{t-1}, C_1^t).
\end{equation}
Every decent model of evolution for $X_t$ depends on $X_1^{t-1}$ but also on $C_1^t$. The natural filtration in this setting is $\Fcal_{t-1}=\sigma(X_1^{t-1}, C_1^t)$.

Another setup, where the filtration might not be completely observed, is chains with infinite memory that can model potentially infinite neuronal networks. Even if we do not observe the chain on the whole network with infinite past, we might be interested by approximating $p^\star_t$, which exists as a function of the infinite network, with functions that only involve a smaller subset of observed neurons (see Section \ref{sec:infinite_chain} for more details).

\subsection{Models and penalized (partial) log-likelihood}

Consider a sequence of models $(\{p^m_\theta : \theta \in \Theta_m \subset \Rbb^{D_m}\})_{m \in \Mcal}$ for some countable set $\Mcal$. Each $p^m_\theta$ is a sequence $p^m_\theta=(p^m_{\theta,t})_{t\in [n]}$, with $p^m_{\theta,t}$ being a candidate at being $p^\star_t$. In particular, the candidate $p^m_{\theta,t}$ must be predictable.

For any $m \in \Mcal$, define the (partial) log-likelihood of parameter $\theta \in \Theta_m$ given the observations by
\begin{equation*}
\ell_n(\theta)
 = \sum_{t=1}^n \log p^m_{\theta,t}(X_t).
\end{equation*}
Note that in the case mentioned above where $\Fcal_t=\sigma(X_1^t)$ for $t\geq 1$ and $\Fcal_0$ is the trivial $\sigma$-algebra, because of~\eqref{eq:dens_prod}, $\ell_n(\theta)$ is exactly the log-likelihood $\log p^m_\theta(X_1^n)$. For models as in~\eqref{eq:dens_prod_contexte}, this partial log-likelihood still has the convenient form of a conditional log-likelihood: $\ell_n(\theta) = \log p^m_\theta(X_1^n | C_1^n)$, and it matches Cox's partial likelihood \cite{cox1975partial}. This might no longer be the case for larger filtrations.

For each $m \in \Mcal$, define the maximum likelihood estimator of model $m$ by
\begin{equation*}
 \hat{\theta}^m \in \underset{\theta \in \Theta_m}{\argmax} \; \frac{1}{n} \ell_n(\theta).
\end{equation*}

Finally, take a penalty $\pen : \Mcal \to \Rbb_{+}$ and select a model $\hat{m}$ that minimizes the penalized log-likelihood:
\begin{equation*}
\widehat{m} \in \underset{m \in \Mcal}{\argmin} \left(-\frac{1}{n} \ell_n(\hat{\theta}^m) + \pen(m) \right).
\end{equation*}

The penalized likelihood estimator of $p^\star$ is therefore $\tilde{p}=p^{\hat{m}}_{\hat{\theta}^{\hat{m}}}$. We aim to understand the properties of approximation of $\tilde{p}$ with respect to $p^\star$.

\subsection{Stochastic loss function}

Classical approaches \cite{massart2007concentration, spok2012,spok2017}, generally use an expectation of the contrast to define the loss function. For instance, in i.i.d. examples, the log-likelihood is naturally linked to the Kullback-Leibler divergence between the distributions.

Here, we want to keep the inherent martingale structure that comes with the filtration and with our object of interest $p^\star$. This is why we are using the stochastic loss function $\Kbf_n$ defined as follows: for any sequence of conditional densities $p = (p_t)_{t \in [n]}$, let
\begin{equation*}
\Kbf_n(p)
 = \frac{1}{n} \sum_{t=1}^n \Ebb\left[ \log \frac{p_t^\star(X_t)}{p_t(X_t)} \, \Big| \Fcal_{t-1} \right].
\end{equation*}

This can be seen as the 
mean of the conditional Kullback-Leibler divergence in the sense that
\begin{equation*}
 \Ebb\left[ \log \frac{p_t^\star(X_t)}{p_t(X_t)} \, \Big| \Fcal_{t-1} \right]
\end{equation*}
is a predictable quantity which corresponds to the Kullback-Leibler divergence between the distributions with densities $p^\star_t$ and $p_t$ w.r.t $\mu$, conditionally to $\Fcal_{t-1}$.

Note that in the case where $\Fcal_t=\sigma(X_1^t)$ for $t\geq 1$ and $\Fcal_0$ is the trivial $\sigma$-algebra, because of \eqref{eq:dens_prod}, $n\Ebb[\Kbf_n(p)]$ is exactly the Kullback-Leibler divergence between the distributions defined respectively by $p^\star$ and $p$.

Our oracle inequalities presented below are bounds on the stochastic loss $\Kbf_n(\tilde{p})$ in probability and in expectation respectively.


\section{Main results \label{sec:main}}

\subsection{Assumptions}
Let us first discuss the main assumptions.

Because we are using a Kullback-Leibler-like divergence as a loss, we need to ensure that it does not diverge. A natural assumption in this respect is to assume that $p^\star$ and the candidates $p^m_\theta$ stay far from $0$ and do not explode. This is done almost surely in Assumption~\ref{hyp_tails_bounded} and with high probability only in Assumption~\ref{hyp_tails}.

\begin{customhyp}{1}\label{hyp_tails_bounded} There exists $\varepsilon > 0$ such that a.s., for all $t \in [n]$, 
$p^\star_t(X_t) \in [\varepsilon, \varepsilon^{-1}]$ and for all $m \in \Mcal$ and all $\theta \in \Theta_m$, 
$p^m_{\theta,t}(X_t) \in [\varepsilon, \varepsilon^{-1}]$. Without loss of generality, we assume $\log \varepsilon < -1$.
\end{customhyp}

A weaker version of this assumption is the following.

\begin{customhyp}{1bis}\label{hyp_tails}
For all $m \in \Mcal$, there exists a finite constant $B_m$ such that for all $y \geq 1$,
\begin{equation*}
 \Pbb\left( \exists t \in [n], \ \exists m \in \Mcal, \ \sup_{\delta, \theta \in \Theta_m \cup \{\star\}} \left| 
 \log \frac{p^m_{\delta,t}(X_t)}{p^m_{\theta,t}(X_t)}
 \right| 
 > B_m y \, \Big| \, \Fcal_{t-1} \right) \leq e^{-y} \quad \text{a.s.},
\end{equation*}
with the convention $p^m_\star = p^\star$. Without loss of generality, we assume $B_m \geq 1$ for all $m \in \Mcal$.
\end{customhyp}

\bigskip

The second category of assumptions replaces the exponential family assumption of the classic asymptotic results on MLE. It states that the parameterization of the models is Lipschitz w.r.t. some norm that is bounded on $\Theta_m$.

\begin{customhyp}{2}\label{hyp_lipschitz_bounded} For all $m \in \Mcal$, there exist a norm $\| \cdot \|_m$ on $\Rbb^{D_m}$ and finite, positive constants $L_m$ and $M_m$ such that a.s., for all $t \in [n]$, for all $m \in \Mcal$ and all $\delta, \theta \in \Theta_m$,
\begin{equation*}
\left| \log \frac{p^m_{\delta,t}(X_t)}{p^m_{\theta,t}(X_t)} \right|
 \leq L_m \|\delta - \theta\|_{m}
\end{equation*}
and
\begin{equation*}
 \|\delta - \theta\|_{m} \leq M_m.
\end{equation*}
Without loss of generality, we assume $L_m M_m \geq 1$ for all $m \in \Mcal$.
\end{customhyp}

Likewise, a weaker version of this assumption is

\begin{customhyp}{2bis}\label{hyp_lipschitz} For all $m \in \Mcal$, there exist a norm $\| \cdot \|_m$ on $\Rbb^{D_m}$ and finite, positive constants $L_m$ and $M_m$ such that
\begin{equation*}
\Pbb\left(
 \exists t \in [n], \ \exists m \in \Mcal, \ \sup_{\delta, \theta \in \Theta_m}
 \left| \log \frac{p^m_{\delta,t}(X_t)}{p^m_{\theta,t}(X_t)} \right|
 > L_m \|\delta - \theta\|_m \log n
\right) \leq n^{-1}
\end{equation*}
and
\begin{equation*}
 \|\delta - \theta\|_{m} \leq M_m.
\end{equation*}
Without loss of generality, we assume $L_m M_m \geq 1$ for all $m \in \Mcal$.
\end{customhyp}

Finally we will need in the proof to define a supremum of $x \mapsto p^\star_t(x)$ and $p^m_{\theta,t}(x)$ over $\Xcal$. The following assumption ensures that we can define it in a way that is measurable w.r.t. the filtration $\Fcal_{t-1}$.

\begin{customhyp}{3}\label{densites_continues} There exists a countable dense subset $\Qcal$ of $\Xcal$ such that almost surely, for all $t \in [n]$, $m \in \Mcal$ and $\theta \in \Theta_m$,
\begin{equation*}
\sup_{x \in \Qcal} p^m_{\theta,t}(x) = \sup_{x \in \Xcal} p^m_{\theta,t}(x)
\end{equation*}
and
\begin{equation*}
\sup_{x \in \Qcal} p^\star_t(x) = \sup_{x \in \Xcal} p^\star_t(x).
\end{equation*}
\end{customhyp}

Note that if $\Xcal$ is discrete and countable, this assumption is automatically verified. Likewise, since $\Xcal$ is separable (because it is assumed to be Polish), if almost surely, for all $m \in \Mcal$ and $\theta \in \Theta_m$, the (random) functions $x \mapsto p^\star_t(x)$ and $x \mapsto p^m_{\theta,t}(x)$ are continuous, this assumption holds.

With this small set of assumptions, we can now state our oracle inequalities.

\subsection{Oracle inequality, bounded case}

\begin{theorem}\label{generalmodelselectiontheorem_bounded}
Assume that $n \geq 2$ and that Assumptions~\ref{hyp_tails_bounded}, \ref{hyp_lipschitz_bounded} and~\ref{densites_continues} hold. There exist positive numerical constants $C$ and $C'$ such that the following holds.
For each $m \in \Mcal$, let $A_m = L_m M_m + 2 \log(\varepsilon^{-1})$, and assume that
 \begin{equation*}
\Sigma = \sum_{m \in \Mcal} \log(A_m) e^{-D_m} < +\infty .
\end{equation*}
Let $\kappa \in (0,1]$.
If for all $m \in \Mcal$,
\begin{equation*}
 \pen(m) \geq \frac{C}{\kappa} A_m^2 \log(\varepsilon^{-1})^{3/2} \log(nA_m)^2 \frac{D_m}{n},
\end{equation*}
then for all $x \geq 0$, with probability at least $1 - 18 \log(n) \Sigma e^{-x}$,
\begin{multline*}
(1 - \kappa) 
\Kbf_n(\tilde{p})
    \leq \inf_{m \in \Mcal} \left( (1 + \kappa) \inf_{\theta \in \Theta^{D_m}} \Kbf_n(p_{\theta}^{m})
    + 2 \pen(m)
    \right) \\
    + \frac{C'}{\kappa} \left(
        A_m \log(\varepsilon^{-1})^{3/2} \log(nA_m)^2
        + A_{\hat{m}} \log(\varepsilon^{-1})^{3/2} \log(nA_{\hat{m}})^2
    \right)\frac{x}{n}.
\end{multline*}
\end{theorem}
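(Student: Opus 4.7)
The plan is to follow the standard Massart-style contrast decomposition, but with the martingale-based exponential inequality from \cite{aubert:hal-04526484} replacing Talagrand's inequality. Write the empirical contrast $\gamma_n(p) = \frac{1}{n}\sum_{t=1}^n \log(p_t^\star(X_t)/p_t(X_t))$ and decompose it as $\gamma_n(p) = \Kbf_n(p) + \nu_n(p)$, where $\nu_n(p)$ is a sum of centered (w.r.t.\ $\Fcal_{t-1}$) martingale increments for any fixed predictable $p$. The defining property of $\widehat{m}$ yields, for every $m \in \Mcal$ and $\theta \in \Theta_m$,
\begin{equation*}
\Kbf_n(\tilde{p}) \leq \Kbf_n(p^m_\theta) + \bigl(\nu_n(p^m_\theta) - \nu_n(\tilde{p})\bigr) + \pen(m) - \pen(\widehat{m}).
\end{equation*}

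The core of the proof is to bound $\nu_n(p^m_\theta)$ and $\nu_n(\tilde{p})$ uniformly in $\theta$ and $m$. For a fixed $m$, I would first cover $\Theta_m$ by an $\eta$-net in the norm $\|\cdot\|_m$, of cardinality at most $(3 M_m/\eta)^{D_m}$; Assumption~\ref{hyp_lipschitz_bounded} then reduces the supremum over $\Theta_m$ to a maximum over the net, at the price of an additive $L_m \eta$ error. Assumptions~\ref{hyp_tails_bounded} and~\ref{densites_continues} ensure the martingale increments $\log(p^\star_t/p^m_{\theta,t})(X_t)$ are bounded by $2\log(\varepsilon^{-1})$ and that all relevant suprema remain $\Fcal_{t-1}$-measurable, so that the renormalized-martingale exponential inequality of \cite{aubert:hal-04526484} applies at each net point. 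A union bound over the net, choosing $\eta \simeq 1/(n L_m)$, would give: with probability at least $1 - e^{-y}$, uniformly in $\theta \in \Theta_m$,
\begin{equation*}
|\nu_n(p^m_\theta)| \leq \kappa\, \Kbf_n(p^m_\theta) + \frac{C}{\kappa}\, A_m^2 \log(\varepsilon^{-1})^{3/2} \log(n A_m)^2\, \frac{D_m + y}{n}.
\end{equation*}

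The final step is a union bound over $m \in \Mcal$ with weights $\log(A_m)e^{-D_m}$ (hence the summability assumption $\Sigma < +\infty$), applied both to the candidate oracle model $m$ and to every possible value of $\widehat{m}$ (which is why $A_{\widehat{m}}$ appears in the deviation term). Inserting these controls into the basic inequality, using the lower bound on $\pen(m)$ to absorb the $D_m$-proportional term coming from the oracle side into the penalty, and moving $\kappa \Kbf_n(\tilde{p})$ to the left, one obtains $(1-\kappa)\Kbf_n(\tilde{p}) \leq (1+\kappa)\Kbf_n(p^m_\theta) + 2\pen(m) + (\mathrm{deviation})$; infimizing over $\theta$ and $m$ yields the stated oracle inequality, while the factor $18 \log(n)\Sigma$ comes from the peeling inside the exponential inequality and the double union bound.

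The hard part will be the sharp application of the stochastically normalized martingale inequality of \cite{aubert:hal-04526484}: unlike Talagrand's inequality, which would produce $\sqrt{\Ebb[\Kbf_n]}$ on the right-hand side and hence force an inequality on $\Ebb[\Kbf_n(\tilde{p})]$ only, the stochastic normalization here produces $\Kbf_n$ itself, which is exactly what allows the oracle inequality to hold for the random loss $\Kbf_n(\tilde{p})$. A second delicate point is the careful bookkeeping of the logarithmic factors: one $\log(nA_m)$ arises from the stochastic normalization (peeling over the martingale quadratic variation), the other from the discretization of $\Theta_m$ together with the union bound over $m$, producing the $\log(nA_m)^2$ in the penalty calibration.
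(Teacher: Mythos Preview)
Your overall architecture is the paper's: Massart contrast decomposition, control of the centered process via the self-normalized martingale inequality of \cite{aubert:hal-04526484}, union bound over $m\in\Mcal$ with weights $\log(A_m)e^{-D_m}$, and absorption of the $D_m/n$ term into the penalty. Two points deserve correction.

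\textbf{Chaining.} The paper does not build its own $\eta$-net and apply a pointwise inequality at net points. It applies Theorem~5 of \cite{aubert:hal-04526484} \emph{directly} to the process $\delta\mapsto n\,\nu(g^{m'}_\delta)$ over the enlarged parameter set $\Theta_{m'}\cup\{\star\}$ (the extra point $\star$ is the pivot $p^\star$, which sidesteps the misspecification issue). That theorem already encapsulates the chaining and yields, in the paper's Lemma~\ref{controlofdiffnufm},
\[
\sup_{\delta\in\Theta_{m'}}\frac{|\nu(g^{m'}_\delta)|}{2\Vbf_n(p^{m'}_\delta)+y_{m'}^2/n}\;\leq\;80\,(2\eta_{m'}+\eta_{m'}^2 A_{m'}),
\]
with $y_{m'}^2=\eta_{m'}^{-2}(\sigma_{m'}^2+x+D_{m'})$ and $\sigma_{m'}^2\lesssim A_{m'}D_{m'}\log(nA_{m'})^2$. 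Both $\log(nA_{m'})$ factors come from this $\sigma_{m'}$, i.e.\ from inside the chaining of Theorem~5, not one from peeling and one from your net as you suggest.

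\textbf{The missing link $\Vbf_n\to\Kbf_n$.} This is the genuine gap. The self-normalized inequality does \emph{not} ``produce $\Kbf_n$ itself'': its stochastic normalization is the bracket
\[
\Vbf_n(p)=\frac{1}{n}\sum_{t=1}^n\Ebb\!\left[\Big(\log\tfrac{p^\star_t(X_t)}{p_t(X_t)}\Big)^2\,\Big|\,\Fcal_{t-1}\right],
\]
not the loss $\Kbf_n(p)$. The boundedness $|\log(p^\star_t/p^m_{\theta,t})|\leq 2\log(\varepsilon^{-1})$ that you invoke is not enough to pass from one to the other (the integrand of $\Kbf_n$ is signed). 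The paper closes this gap with a separate Lemma~\ref{lemmaVariance}: a truncated second-moment bound adapted from \cite{shen2013adaptive} (Lemma~\ref{lemma_4STG}) gives a pointwise control by the squared Hellinger distance, and then $2h^2\leq \DKL$ yields $\Vbf_n(p^{m'}_\delta)\leq 16\,(F^\infty_{m'})^2\,\Kbf_n(p^{m'}_\delta)$ with $F^\infty_{m'}=2\log(\varepsilon^{-1})$. This is precisely where the factor $\log(\varepsilon^{-1})^{3/2}$ in the penalty originates (through the choice $\eta_{m'}\asymp \kappa/(A_{m'}(F^\infty_{m'})^{3/2})$ needed to make $(2\eta_{m'}+\eta_{m'}^2 A_{m'})\cdot 32(F^\infty_{m'})^2\leq \kappa$). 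Without this step your bookkeeping of the constants cannot be completed.
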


\begin{proof}
 See Section~\ref{sectionproofgeneralmodelselection}.
\end{proof}

This result is an oracle inequality in probability. The penalty term is proportional to $D_m/n$, as in classical oracle inequalities for nested or not too complex families of models. This is ensured by the summability condition on $\Sigma$ (see \cite{birge-massart, massart2007concentration} for instance for a discussion about the complexity of a family of models). Hence, it can be read as a usual oracle inequality: the loss is--up to a constant factor and a residual term--smaller than the best bias-variance trade-off in the family of models, with a variance which is of order $D_m/n$.

Due to the generality of the result, the penalty is actually a bit larger than in the original AIC criterion, with additional logarithmic factors, and depends on the lower bound $\varepsilon$ and the Lipschitz constants $L_m$ and $M_m$. Note the presence of a term depending on $\hat{m}$ in the residual term: due to the fact that $\Mcal$ might be infinite, additional assumptions are required to get rid of it, such as the uniform bound on $(A_m)_{m \in \Mcal}$ introduced in the following corollary.

The next corollary gives a result in expectation under a slightly more restrictive assumption (when $\sup_m L_m M_m$ is bounded), and is proved in Section~\ref{sec_proof_corollaries}.

\begin{corollary}
\label{cor_generalmodelselection_bounded}
Under the assumptions and with the same constants and notations of Theorem~\ref{generalmodelselectiontheorem_bounded}, if there exists $A(n)$ such that
\begin{equation*}
\sup_{m \in \Mcal} A_m \leq A(n) , 
\end{equation*}
then
\begin{multline*}
(1 - \kappa) \Ebb \left[\Kbf_n(\tilde{p})\right]
 \leq \Ebb\left[ \inf_{m \in \Mcal} \left( (1 + \kappa) \inf_{\theta \in \Theta^{D_m}} \Kbf_n(p_{\theta}^{m})
 + 2 \pen(m)
 \right) \right] \\
 + \frac{36 C'}{\kappa} \Sigma A(n) \log(\varepsilon^{-1})^{3/2} \log(nA(n))^2 \frac{\log n}{n}.
\end{multline*}
\end{corollary}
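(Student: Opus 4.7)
The plan is to convert the tail bound of Theorem~\ref{generalmodelselectiontheorem_bounded} into a bound in expectation via the standard integration-of-tails identity $\Ebb[W_+] = \int_0^\infty \Pbb(W > t)\,dt$. To this end, I introduce the random variable
$$
Z := (1-\kappa)\,\Kbf_n(\tilde p) - \inf_{m \in \Mcal}\Bigl((1+\kappa)\inf_{\theta \in \Theta_m}\Kbf_n(p_\theta^m) + 2\pen(m)\Bigr),
$$
so that the corollary reduces to an upper bound on $\Ebb[Z]$; since $\Ebb[Z] \leq \Ebb[Z_+]$, it is enough to bound the latter.

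Next, I exploit the assumption $\sup_{m \in \Mcal} A_m \leq A(n)$ to deal with the random quantity $A_{\hat m}$ that appears in the residual of Theorem~\ref{generalmodelselectiontheorem_bounded}. Under the uniform bound, both $A_m$ and $A_{\hat m}$ in that residual are dominated by the deterministic $A(n)$. Setting
$$
K := \frac{2 C'}{\kappa n}\, A(n)\,\log(\varepsilon^{-1})^{3/2}\,\log(nA(n))^2 \qquad \text{and} \qquad \alpha := 18\log(n)\,\Sigma,
$$
Theorem~\ref{generalmodelselectiontheorem_bounded} then yields $\Pbb(Z > K x) \leq \alpha e^{-x}$ for every $x \geq 0$, or equivalently $\Pbb(Z > t) \leq \alpha e^{-t/K}$ for every $t \geq 0$.

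Finally, integrating this tail bound with the crude estimate $\min(1,\alpha e^{-t/K}) \leq \alpha e^{-t/K}$ gives
$$
\Ebb[Z] \;\leq\; \Ebb[Z_+] \;=\; \int_0^\infty \Pbb(Z > t)\,dt \;\leq\; \int_0^\infty \alpha\, e^{-t/K}\,dt \;=\; K\alpha,
$$
and unfolding the definitions of $K$ and $\alpha$ recovers exactly the residual $\frac{36 C'}{\kappa}\,\Sigma\, A(n)\,\log(\varepsilon^{-1})^{3/2}\,\log(nA(n))^2\,\frac{\log n}{n}$ stated in the corollary.

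No serious obstacle is expected: the hard analysis has already been carried out in Theorem~\ref{generalmodelselectiontheorem_bounded}, and the corollary is essentially a mechanical conversion from in-probability to in-expectation. The two minor points to keep in mind are (i) that the uniform bound hypothesis is what allows the random $A_{\hat m}$ to be replaced by $A(n)$ under the expectation, and (ii) that bounding the tail integrand by $\alpha e^{-t/K}$ over all $t \geq 0$ (instead of splitting the integral at $K\log\alpha$ to save a logarithmic factor) is the source of the $\Sigma\log n$ factor displayed in the statement.
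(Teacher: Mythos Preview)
Your proposal is correct and follows exactly the approach indicated in the paper, which reduces the corollary to the tail-integration identity $\Ebb[Z]\leq\int_0^\infty\Pbb(Z\geq t)\,dt$. You have simply spelled out the details (definition of $Z$, replacement of $A_m,A_{\hat m}$ by $A(n)$, and the arithmetic $K\alpha=\frac{36C'}{\kappa}\Sigma A(n)\log(\varepsilon^{-1})^{3/2}\log(nA(n))^2\frac{\log n}{n}$) that the paper leaves implicit.
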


\subsection{Oracle inequality, unbounded case}

The bounded case might be a bit restrictive for some applications, so we can relax the assumptions, up to additional logarithmic factors.

\begin{theorem}\label{generalmodelselectiontheorem}
Assume that $n \geq 2$ and that Assumptions~\ref{hyp_tails}, \ref{hyp_lipschitz} and~\ref{densites_continues} hold. There exist positive numerical constants $C$ and $C'$ such that the following holds.
For each $m \in \Mcal$, let $A_m = L_m M_m + B_m$, and assume that
 \begin{equation*}
\Sigma = \sum_{m \in \Mcal} \log(A_m) e^{-D_m} < +\infty
\end{equation*}
Let $\kappa \in (0,1]$.
If for all $m \in \Mcal$,
\begin{equation*}
 \pen(m) \geq \frac{C}{\kappa} A_m^2 B_m^{3/2} \log(n)^{7/2} \log(nA_m)^2 \frac{D_m}{n},
\end{equation*}
then for all $x \geq 0$, with probability at least $1 - 2n^{-1} - 18 \log(n) \Sigma e^{-x}$,
\begin{multline*}
(1 - \kappa) \Kbf_n(\tilde{p})
    \leq \inf_{m \in \Mcal} \left( (1 + \kappa) \inf_{\theta \in \Theta^{D_m}} \Kbf_n(p_{\theta}^{m})
    + 2 \pen(m)
    \right) \\
    + \frac{C'}{\kappa} \left(
        A_m B_m^{3/2} \log(nA_m)^2
        + A_{\hat{m}} B_{\hat{m}}^{3/2} \log(nA_{\hat{m}})^2
        \right) \frac{(\log n)^{5/2} x}{n}.
\end{multline*}
\end{theorem}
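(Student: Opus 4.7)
The plan is to reduce the unbounded statement to Theorem~\ref{generalmodelselectiontheorem_bounded} by truncating the log-ratios on a high-probability event. Assumptions~\ref{hyp_tails} and~\ref{hyp_lipschitz} provide control on precisely the same log-ratios that Assumptions~\ref{hyp_tails_bounded} and~\ref{hyp_lipschitz_bounded} bound a.s., but only on a set of probability $1 - O(n^{-1})$ and with an extra $\log n$ factor.

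More concretely, I would apply Assumption~\ref{hyp_tails} with $y = \log n$ together with a union bound over $t \in [n]$, and combine with Assumption~\ref{hyp_lipschitz} to build an event $\Omega_0$ with $\Pbb(\Omega_0) \geq 1 - 2n^{-1}$ on which, simultaneously for all $t \in [n]$, all $m \in \Mcal$ and all $\delta, \theta \in \Theta_m \cup \{\star\}$:
\begin{equation*}
\left| \log \frac{p^m_{\delta,t}(X_t)}{p^m_{\theta,t}(X_t)} \right|
  \leq B_m \log n
  \quad\text{and}\quad
\left| \log \frac{p^m_{\delta,t}(X_t)}{p^m_{\theta,t}(X_t)} \right|
  \leq L_m \|\delta - \theta\|_m \log n .
\end{equation*}
On $\Omega_0$, the situation matches Assumptions~\ref{hyp_tails_bounded} and~\ref{hyp_lipschitz_bounded} with model-dependent effective constants $\log \varepsilon_{\text{eff},m}^{-1} = B_m \log n$ and $L_{m,\text{eff}} = L_m \log n$.

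I would then replay the proof of Theorem~\ref{generalmodelselectiontheorem_bounded} on $\Omega_0$ with these effective constants. The effective bound $A_{m,\text{eff}} = L_{m,\text{eff}} M_m + 2\log\varepsilon_{\text{eff},m}^{-1} \leq (L_m M_m + 2 B_m)\log n \leq 3 A_m \log n$, so the bounded penalty $A_{m,\text{eff}}^2 (\log\varepsilon_{\text{eff},m}^{-1})^{3/2} \log(n A_{m,\text{eff}})^2 D_m/n$ becomes, up to a numerical constant absorbed into $C$, exactly $A_m^2 B_m^{3/2} (\log n)^{7/2} \log(nA_m)^2 D_m/n$; the residual term $A_{m,\text{eff}}(\log\varepsilon_{\text{eff},m}^{-1})^{3/2} \log(n A_{m,\text{eff}})^2 x/n$ analogously becomes $A_m B_m^{3/2} (\log n)^{5/2} \log(nA_m)^2 x/n$. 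These match the penalty and residual claimed in the theorem, and adding $\Pbb(\Omega_0^c) \leq 2n^{-1}$ to the failure probability $18 \log(n) \Sigma e^{-x}$ of Theorem~\ref{generalmodelselectiontheorem_bounded} yields the announced probability bound.

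The main obstacle is that the proof of Theorem~\ref{generalmodelselectiontheorem_bounded} uses the a.s. bounds of Assumptions~\ref{hyp_tails_bounded}--\ref{hyp_lipschitz_bounded} pathwise inside the stochastically normalized exponential inequality of~\cite{aubert:hal-04526484}, so one cannot simply intersect with $\Omega_0$ at the very end. The clean workaround is to replace each candidate log-density $\log p^m_{\theta,t}$ by its truncation against $\pm B_m \log n$ (relative to $\log p^\star_t$), so that the truncated models satisfy the bounded assumptions deterministically while coinciding with the originals on $\Omega_0$; one then applies Theorem~\ref{generalmodelselectiontheorem_bounded} to the truncated family and transfers the conclusion to the original MLE on $\Omega_0$. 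A second delicate point is that the effective $\log\varepsilon_{\text{eff},m}^{-1} = B_m \log n$ is now model-dependent, so one must revisit the steps of the proof of Theorem~\ref{generalmodelselectiontheorem_bounded} where $\log\varepsilon^{-1}$ appears as a global constant (notably in the peeling/slicing used to control the supremum of the centred log-ratio process) to check that the dependence in $m$ only enters through the announced $B_m^{3/2}$ factor and does not couple across models via the summability constant $\Sigma$.
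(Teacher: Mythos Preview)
Your high-level strategy is exactly the one the paper uses: inflate the constants by $\log n$, work on a good event of probability $\geq 1-2n^{-1}$, and obtain the extra powers of $\log n$ in the penalty and residual by substitution. However, the paper does \emph{not} reduce Theorem~\ref{generalmodelselectiontheorem} to Theorem~\ref{generalmodelselectiontheorem_bounded} as a black box; it proves both theorems with a single argument (Section~\ref{sectionproofgeneralmodelselection}), carrying case-dependent quantities $F^\infty_{m}$, $F^{\text{lip}}_{m}$ and an event $A_n^n$ throughout, and invoking Theorem~5 of~\cite{aubert:hal-04526484} \emph{with} the event $A=A_n^n$ built in. The truncation appears only inside the variance $\Vbf_n$ (via the indicator $\one_{|\log(p^\star_t/p^m_{\delta,t})|\leq F^\infty_m}$), never on the models themselves, and Lemma~\ref{lemmaVariance} compares this truncated $\Vbf_n$ to the \emph{untruncated} $\Kbf_n$.

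Your proposed workaround---truncate $\log p^m_{\theta,t}$ relative to $\log p^\star_t$ and apply Theorem~\ref{generalmodelselectiontheorem_bounded}---runs into a real obstacle you did not flag: the truncated objects $p^\star_t\exp\bigl(\mathrm{trunc}(\log(p^m_{\theta,t}/p^\star_t))\bigr)$ are no longer probability densities, so Assumption~\ref{hyp_tails_bounded} and the very definition of $\Kbf_n$ (as a conditional Kullback--Leibler) fail for them. Moreover, even on $\Omega_0$ the quantity $\Kbf_n(\tilde p)$ involves a conditional expectation over \emph{all} values of $X_t$, not only the realized ones, so the truncated and original losses need not coincide on $\Omega_0$. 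Combined with the second issue you do mention (the effective $\log\varepsilon^{-1}=B_m\log n$ is model-dependent, whereas Theorem~\ref{generalmodelselectiontheorem_bounded} uses a single $\varepsilon$), you would in any case be forced to reopen the proof of Theorem~\ref{generalmodelselectiontheorem_bounded}. Once you do that, the natural route is precisely the paper's: keep the models intact, put the indicator only in $\Vbf_n$, and intersect with $A_n^n$ inside the concentration inequality rather than at the end.
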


\begin{proof}
 See Section~\ref{sectionproofgeneralmodelselection}.
\end{proof}

Note that we may replace the term $2n^{-1}$ in the probability by $2n^{-\alpha}$ for any $\alpha \in [1,n]$, provided Assumption~\ref{hyp_lipschitz} holds with probability $n^{-\alpha}$ for a bound $\alpha L_m \|\delta - \theta\|_m \log n$. This changes the constants $C$ and $C'$ of Theorem~\ref{generalmodelselectiontheorem} into $C \alpha^{7/2}$ and $C' \alpha^{5/2}$ respectively.

Except for these extra logarithmic factors, the previous result is essentially the same as Theorem \ref{generalmodelselectiontheorem_bounded}. An expectation version of this result holds under similar assumptions on the family of models. Its proof can be found in Section~\ref{sec_proof_corollaries}.

\begin{corollary}
\label{cor_generalmodelselection}
Under the assumptions and with the same constants and notations of Theorem~\ref{generalmodelselectiontheorem}, if there exist $A(n)$ and $B(n)$ such that
\begin{equation*}
\sup_{m \in \Mcal} A_m \leq A(n)
 \quad \text{and} \quad
 \sup_{m \in \Mcal} B_m \leq B(n) ,
\end{equation*}
then
\begin{multline*}
(1 - \kappa) \Ebb \left[\Kbf_n(\tilde{p})\right]
 \leq \Ebb\left[ \inf_{m \in \Mcal} \left( (1 + \kappa) \inf_{\theta \in \Theta^{D_m}} \Kbf_n(p_{\theta}^{m})
 + 2 \pen(m)
 \right) \right] \\
 + \frac{40 C'}{\kappa} \Sigma A(n) B(n)^{3/2} \log(nA(n))^2 \frac{(\log n)^{7/2}}{n}.
\end{multline*}
\end{corollary}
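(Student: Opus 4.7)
The plan is to integrate the high-probability bound of Theorem~\ref{generalmodelselectiontheorem} in $x$ to produce an expectation bound. The obstruction is that the tail carries an $x$-independent term $2n^{-1}$ whose integral over $[0,\infty)$ diverges, so I will first extract from Assumption~\ref{hyp_tails} a deterministic almost sure upper bound on $\Kbf_n(\tilde p)$ depending only on $B(n)$, in order to cap the integration range.

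For the deterministic bound, I fix $m \in \Mcal$, $\theta \in \Theta_m$ and $t \in [n]$, and specialise Assumption~\ref{hyp_tails} to $\delta = \star$: this gives $\Pbb(|\log(p^\star_t(X_t)/p^m_{\theta,t}(X_t))| > B_m y \mid \Fcal_{t-1}) \leq e^{-y}$ a.s.\ for all $y \geq 1$. Applying the layer-cake formula conditionally (and bounding the conditional probability by $1$ for $y < 1$) then gives $\Ebb[|\log(p^\star_t/p^m_{\theta,t})(X_t)| \mid \Fcal_{t-1}] \leq 2 B_m$ a.s. Since this conditional expectation dominates the conditional Kullback--Leibler divergence between $p^\star_t$ and $p^m_{\theta,t}$, averaging over $t$ yields $\Kbf_n(p^m_\theta) \leq 2 B_m$ a.s., uniformly in $m, \theta$. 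Evaluating at the random pair $(\hat m, \hat\theta^{\hat m})$ and using $B_{\hat m} \leq B(n)$ yields $\Kbf_n(\tilde p) \leq 2 B(n)$ a.s.

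Next, set $Y := \inf_{m \in \Mcal}\bigl((1+\kappa)\inf_{\theta \in \Theta_m}\Kbf_n(p^m_\theta) + 2\pen(m)\bigr) \geq 0$ and
\begin{equation*}
a := \frac{2C'}{\kappa}\, A(n)\, B(n)^{3/2}\, \log(nA(n))^2\, \frac{(\log n)^{5/2}}{n}.
\end{equation*}
Using $\sup_m A_m \leq A(n)$ and $\sup_m B_m \leq B(n)$ to majorise the $A_m$ and $A_{\hat m}$ residual factor of Theorem~\ref{generalmodelselectiontheorem}, that theorem gives $\Pbb((1-\kappa)\Kbf_n(\tilde p) - Y > a x) \leq 2n^{-1} + 18\log(n)\Sigma e^{-x}$ for every $x \geq 0$. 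Setting $W := ((1-\kappa)\Kbf_n(\tilde p) - Y)_+$, the a.s.\ bound on $\Kbf_n(\tilde p)$ together with $Y \geq 0$ gives $W \leq 2B(n)$ a.s., so the layer-cake formula yields
\begin{equation*}
\Ebb[W] = \int_0^{2B(n)} \Pbb(W > t)\, dt \leq \int_0^{2B(n)} \bigl(2n^{-1} + 18\log(n)\Sigma e^{-t/a}\bigr)\, dt \leq \frac{4B(n)}{n} + 18\, a\, \log(n)\, \Sigma.
\end{equation*}

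To conclude, I substitute the value of $a$: the second term equals $(36C'/\kappa)\Sigma A(n) B(n)^{3/2} \log(nA(n))^2 (\log n)^{7/2}/n$, while the leftover $4B(n)/n$ is of strictly smaller order and is absorbed into this main residual by enlarging the numerical constant from $36$ to $40$ (using $A(n), B(n), \log(nA(n)), \log n \geq 1$ and the fact that $C'$ and $\Sigma$ may be taken bounded below by a constant without loss of generality). The statement then follows from the trivial identity $(1-\kappa)\Ebb[\Kbf_n(\tilde p)] = \Ebb[Y] + \Ebb[(1-\kappa)\Kbf_n(\tilde p) - Y] \leq \Ebb[Y] + \Ebb[W]$. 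The main difficulty in the plan is the $x$-independent residual $2n^{-1}$ in the tail bound: without a deterministic ceiling on $\Kbf_n(\tilde p)$, integration would diverge, and Assumption~\ref{hyp_tails} is precisely the tool that yields such a ceiling through the conditional subexponential control at the observed $X_t$.
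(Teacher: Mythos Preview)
Your proof is correct and follows essentially the same route as the paper: first extract the almost-sure bound $\Kbf_n(p^{m}_\theta) \leq 2B_m$ from Assumption~\ref{hyp_tails} via the layer-cake formula, then integrate the tail estimate of Theorem~\ref{generalmodelselectiontheorem} over the capped range $[0,2B(n)]$, obtaining the $4B(n)/n$ leftover and the $36C'/\kappa$ main residual, and finally absorb the former into the latter to reach the constant $40$. The only minor remark is that your ``without loss of generality $\Sigma$ bounded below'' is not literally true since $\Sigma$ is determined by the model family; the cleaner justification is that $A(n)\geq 2$, $B(n)\geq 1$, $\log(nA(n))\geq \log 4$, and $C'$ can be chosen $\geq 1$ so that the main residual dominates $4B(n)/n$ whenever $\Sigma$ is not too small, and otherwise one simply replaces $\Sigma$ by $\Sigma\vee 1$ in the upper bound---the paper itself glosses over this point in the same way.
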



\subsection{Where does the penalty in \texorpdfstring{$D_m/n$}{Dim/n} comes from ?}

The common crucial point to all non-asymptotic controls of log-likelihood estimators \cite{massart2007concentration, castellan, spok2012, spok2017} lies in the control of the recentered contrast at the estimation point. In our framework, with the above notation, it means controlling
\begin{equation*}
\nu(H) = \frac{1}{n}\sum_{t=1}^n [H_t(X_t)-\Ebb(H_t(X_t)|\Fcal_{t-1})],
\end{equation*}
where $H_t$ is equal to 
\begin{equation*}
H_t
 = H_{\theta,t}^m
 = -\log\left(\frac{p^m_{\theta,t}(X_t)}{p^\star_t(X_t)}\right).
\end{equation*}

As long as $\theta$ is fixed and deterministic, this is a martingale and various exponential tail bounds are applicable. 
Without going into details right now, let us just say that with high probability,
\begin{equation}\label{eq:one_nu}
\nu(H)=\Ocal\left(\sqrt{V(H)/n}\right)
\end{equation}
with
\begin{equation*}
V(H)=\frac{1}{n} \sum_{t=1}^n\Ebb( H_t(X_t)^2|\Fcal_{t-1}).
\end{equation*}
In particular in the i.i.d. framework, where $V(H)$ is deterministic, of the order of the variance of $H_1$,
we recover what the central limit theorem entails, that is, the fluctuation of the empirical process are of order $n^{-1/2}$ multiplied by the standard deviation. In more general settings, $V(H)$ is random: it is to the bracket of the martingale. It is usually necessary to restrict oneself to an event where $V(H)$ is bounded to obtain such non asymptotic control of $\nu(H)$ (see for instance \cite{bercu2008}).

This is still not sufficient to conclude: to understand what happens for the maximum likelihood estimator, we need to control $\nu(H^m_{\hat{\theta}_m,t})$. The fact that $\hat{\theta}_m$ depends on the whole trajectory prevents us from using classical inequalities for centered processes.

\paragraph{Talagrand inequality and Wilks phenomenon in the i.i.d. setting}
In the classical i.i.d. setting, and as a first approach, Talagrand's inequality (see \cite{massart2007concentration} for various model selection contexts) leads to this kind of control:
\begin{equation*}
\sup_{\theta \in \Theta_m} \frac{\nu(H_{\theta}^m)}{\|\theta\|}
 = \Ocal\left(\Ebb\left(\sup_{\theta \in \Theta_m, \|\theta\|\leq 1} \nu(H_{\theta}^m )\right)\right)
 + \Ocal\left(\sqrt{\frac{\sup_{\theta \in \Theta_m, \|\theta\|\leq 1}V(H_\theta^m)}{n}}\right).
\end{equation*}

It turns out that in many models used in \cite{massart2007concentration}, $\nu(H_{\theta}^m)$ is in fact linear or close to linear with respect to $\theta$, so that the first term is of order $\sqrt{D_m/n}$, the dimension of model $m$. In particular, this tells us non asymptotically that
\begin{equation*}
\nu(H_{\hat{\theta}_m}^m) = \Ocal\left( \|\hat{\theta}_m\|\sqrt{D_m/n}\right).
\end{equation*}
Beyond the i.i.d. setting, similar results can be obtained by replacing Talagrand's inequality with Baraud's inequality \cite{baraud}.

In comparison, the Wilks phenomenon predicts that the order of magnitude of $\nu(H_{\hat{\theta}_m}^m)$ is in $D_m/n$, and not $\sqrt{D_m/n}$ (at least if the model $m$ is well specified, that is there exists $\theta^*_m$ such that $p^\star=p_{\theta^*_m}^m$): this non asymptotic Talagrand-like bound is pessimistic. But the trick above with the linearity with respect to $\theta$ shows us that this pessimistic bound can hopefully be multiplied by the norm of $\hat{\theta}_m$. Per se this is not much, but with a careful choice of the set over which the supremum is taken, we can replace $\|\hat{\theta}_m\|$ by the distance to a pivot $\theta^*_m$ such that, approximately,
\begin{equation*}
\nu(H_{\hat{\theta}_m}^m)
 =\Ocal\left( \|\hat{\theta}_m-\theta^*_m\|\sqrt{D_m/n}\right)
 \leq \Ocal \left(\|\hat{\theta}_m-\theta^*_m\|^2 \right)
 + \Ocal \left(\frac{D_m}{n}\right).
\end{equation*}
It remains to be in a nice enough setting (such as Lipschitz parameterization of $\theta \mapsto p_{\theta}^m$) that $\Ocal \left(\|\hat{\theta}_m-\theta^*_m\|^2 \right)$ is a small fraction of the loss function we are using (here $\Kbf_n(p^m_{\hat{\theta}_m})$). All in all, up to constant, the remainder that the penalty is meant to account for is indeed in $D_m/n$ up to multiplicative constants. 

Before going further, let us make a small remark about the pivot $\theta^*_m$. If the model $m$ is well specified, it easy to use the $\theta^*_m$ such that $p^\star=p_{\theta^*_m}^m$. If this is not the case, usually people use the best approximation of $p^\star$ by a $p_\theta^m$ for a certain loss function (say the Kullback-Leibler divergence). So in general, and even in non i.i.d. cases, the classical way to choose a pivot is deterministic, because the loss that is used is deterministic as well.

\paragraph{Non asymptotic Wilks-like results}

In various models, the approach given above cannot work as nicely as it mainly relies on linearity, so the brute force concentration inequality "à la Talagrand" is in fact too pessimistic. What we would like, in an ideal and over-simplified world, is that \eqref{eq:one_nu} holds with a variance term that would be directly $V(H_{\hat{\theta}_m}^m)$, without having to take (and pay for) the supremum over all $H^m_\theta$.

In \cite{massart-wilk}, the authors restrict the previous supremum to nice small balls, so that one gets the correct behavior in $D_m/n$ for $\nu(H_{\hat{\theta}_m}^m)$ directly. This can be especially useful when one wants to produce very sharp constants in the penalty \cite{arlot}.

Another way to think about this, is to provide an upper function of the process, that is  to prove that 
\begin{equation*}
\sup_{\theta \in \Theta_m} \left[ \nu(H_{\theta}^m)- \Ocal\left(\sqrt{V(H_{\theta}^m)/n}\right) \right]
\end{equation*}
is negative with high probability. This is Spokoiny's approach \cite{spok2012, spok2017}, whose statistical results are the closest approach to ours for their generality, even if the author does not perform model selection per se.

\paragraph{Self-normalized martingales}

Note that it is far from obvious to obtain an inequality such as~\eqref{eq:one_nu} when $V(H)$ is random. It is possible for martingales, in which case $V(H)$ is  usually a random quantity called the bracket of the martingale.
Exponential inequalities for martingales have been developed, especially for point processes where one can go from a control of the martingale with deterministic upper bound on the bracket, which already tells a lot on the properties of the maximum likelihood estimator \cite{vandeGeer}, to a control where the bracket of the martingale can replace the deterministic upper bound in the deviation \cite{Hansen_2015} (up to some corrections). 

In the same line, many works on self-normalization of martingales try to directly control the ratio of the martingale by its bracket \cite{bercu2008,pena-book}. To our knowledge, nothing exists in this direction for a supremum of the ratios (with random renormalization) except our recent result \cite{aubert:hal-04526484}.

\paragraph{Deterministically renormalized empirical process}

From a more deterministic point of view, several works aim to choose the correct deterministic renormalization of the empirical process in an i.i.d. setting. There are two main ways to use it for model selection. 

On the one hand, if the empirical process itself, once renormalized by a deterministic quantity of the form $d^2(\theta,\theta^*)$ \footnote{for a nice deterministic $d$ distance on $\Theta_m$, that can be linked to $V(H_\theta^m)$, up to possible additional terms}, satisfies a nice exponential inequality, then this can be chained to either directly obtain a Talagrand-like concentration on the supremum \cite{baraud}, or used to get nice upper-functions \cite{spok2012}.

On the other hand, one can refine the renormalization inside the supremum and replace 
\begin{equation*}
\sup_{\theta \in \Theta_m} \frac{\nu(H_{\theta}^m)}{\|\theta\|}
 \quad \mbox{by} \quad
 \sup_{\theta \in \Theta_m} \frac{\nu(H_{\theta}^m)}{d^2(\theta,\theta^*)+x^2},
\end{equation*}
for a positive $x$ to be chosen later. It is with this approach that~\cite{massart2007concentration} proposed a fairly general approach to model selection with penalty proportional to $D_m$ even in non linear settings. In this setting, we still do not obtain the order of magnitude of the Wilks phenomenon directly, but because, as previously, $d^2(\theta,\theta^*)$ is close to the loss, we can still get a penalty in $D_m/n$. The huge advantage is that we can go further away from the linearity assumption by adding more flexibility inside the family of models thanks to the use of $d^2(.,.)$ instead of the Euclidean norm.

\paragraph{Our method and the difference with existing works}

In our case, even the loss is stochastic in the most general case. Only the martingale structure remains. As a consequence, we do not have access to a deterministic $d^2(.,.)$ and we cannot even properly define a nice deterministic pivot $\theta^*_m$ if the model is misspecified. However, we still have access to the bracket of the martingale $V(H_{\theta})$ and it turns out that $V(H_{\hat{\theta}_m}^m)$ is nicely comparable to $\Kbf_n(p_{\hat{\theta}_m}^m)$. We recently proved a concentration inequality for the supremum of stochastically normalized processes \cite{aubert:hal-04526484}, so that we can use more or less Massart's above argument after replacing $d^2(.,.)$ by the bracket of the martingales. The problem of the pivot is solved by working directly in the space of probability and using $p^\star$ as pivot, thus bypassing the misspecification issue.

Finally, let us compare our set of assumptions to \cite{spok2012, spok2017} which constitutes, to our knowledge, the most general non asymptotic results on maximum likelihood estimation. Spokoiny is using the Kullback-Leibler divergence as a reference. It allows him to use a deterministic pivot defined as the closest point to the truth inside the model for the deterministic distance \cite{spok2012} or with additional quadratic corrections \cite{spok2017}. His main assumptions rely on renormalized exponential inequalities on the gradient of the likelihood in a neighborhood of the pivot, where the normalization is quadratic. It allows him to prove a quadratic-like behavior for $\nu(H_{\hat{\theta}_m}^m)$, in this sense proving the non asymptotic Wilks phenomenon rather sharply, which we cannot do with our method.

In contrast, our method is applicable in more general settings: we do not need to use a deterministic loss, nor deterministic parametric pivots, or to assume that the log-likehood is differentiable or that its gradient satistifies exponential inequalities. We only assume that the parameterization is Lipschitz. By martingales properties, the log-likelihood (and not its gradient) automatically satisfies \eqref{eq:one_nu}, which is the key to prove the results on the supremum. In the end, we obtain a generalized AIC criterion (that is, a penalty proportional to $D_m/n$) and prove non asymptotic oracle inequalities in settings where none of the existing work applies.

\section{Applications \label{sec:appli}}

The previous oracle inequalities are very general. This section aims to explicit how they compare to existing results. Let us look at their applications in various settings.

\subsection{The i.i.d. sample case}

Let us begin with the original AIC setting in its i.i.d. format.
Let $X_1, \dots,X_n$ be i.i.d. real valued random variables with density $p^\star_1$. We consider the simple case where the filtration is generated by the observations, so that all the $p^\star_t$ are deterministic and equal to $p^\star_1$.

We also assume that under each model, the variables $X_1, \dots, X_n$ are i.i.d., so that $p^m_{\theta,t} = p^m_{\theta,1}$ for all $t$ and these functions are deterministic.

In this case, $\Kbf_n(p^m_\theta)$ is directly $\DKL(p^\star_1 d\mu, p^m_{\theta,1}d\mu)$, the Kullback-Leibler divergence between the distribution of $X_1$ and the distribution with density $p^m_{\theta,1}$.

\paragraph{Validation of the assumptions}

Assumption~\ref{hyp_tails_bounded} is classical in this setting, see e.g. \cite{massart2007concentration}, at least for the lower bound. In \cite{massart2007concentration} or \cite{castellan}, there is no upper bound assumption but there is a twist: it is not the Kullback-Leibler divergence that is controlled but the Hellinger distance. Here we can also relax the lower bound assumption by using Assumption~\ref{hyp_tails}.

Assumption~\ref{hyp_lipschitz_bounded}--the $L_m$ part--can be a consequence of the fact that $p^m_{\theta,1}$ is Lipschitz with constant $L_m \varepsilon^{-1}$, or of directly assuming that $\log(p^m_{\theta,1})$ is Lipschitz with constant $L_m$. It can be relaxed with Assumption~\ref{hyp_lipschitz}.
Classical asymptotic results (Wilks phenomenon or even just the consistency of the MLE) require very strong differentiability assumptions, that are not needed here.
Note that the bound $M_m$ in Assumptions~\ref{hyp_lipschitz_bounded} and~\ref{hyp_lipschitz} entails that the models $\Theta_m$ are compact, which is assumed in general to obtain consistency in M-estimation, whether explicitely or implicitely by assuming that the estimator converges to some limit, see e.g.~\cite{vdw1996Mestimators}.


The classical model selection "à la Birgé-Massart" for densities assumes that the models are close to linear to work, so it seems that this is incompatible with our boundedness assumption \cite{massart2007concentration}. However this is not the case because we are forcing the parametrization to be a density that satisfies Assumption~\ref{hyp_tails_bounded}. To illustrate this, if we want to compare our assumptions with theirs, let us restrict ourselves to a well known case: the histogram selection \cite{massart2007concentration,castellan}. There, $X_1,...,X_n$ are i.i.d. with density $p^\star_1$ with respect to the Lebesgue measure on $[0,1]$ and the model $m$ is based on a partition with $D_m$ intervals of $[0,1]$ of equal length. Then, for $\theta\in \Theta_m \subset \Rbb^{D_m}$,
\begin{equation*}
p_{\theta,1}^m = \sum_{I\in m} \theta_I {\bf 1}_I.
\end{equation*}
Since this must be a density, the model is in fact
\begin{equation*}
\Theta_m=\left\{\theta=(\theta_I)_{I\in m} \mbox{ such that for all } I, \varepsilon\leq \theta_I \leq \varepsilon^{-1} \mbox{ and } \sum_{I\in m} \frac{\theta_I}{D_m} =1\right\}.
\end{equation*}
In this particular case, $p^m_{\theta,1}$ is Lipschitz with constant $L_m=1$ w.r.t. $\|\theta\|_\infty$ and $M_m=\varepsilon^{-1}-\varepsilon$ is an upper bound of the diameter of $\Theta_m$ for this norm.

Assumption~\ref{densites_continues} is automatically fulfilled with $\mathcal{Q}=\mathbb{Q}\cap[0,1]$, the set of rational numbers.

\paragraph{Result}
Following Theorem \ref{generalmodelselectiontheorem_bounded} and its Corollary, $A_m$ does not depend on $m$ anymore. Therefore, our oracle inequality holds as soon as
\begin{equation*}
pen(m)=\Ocal\left( (\log n)^2 \frac{D_m}{n}\right).
\end{equation*}
Our penalty is larger than the one in \cite{massart2007concentration} due to extra logarithmic factors and looks more like a BIC criterion. At this price we are able to prove an oracle inequality directly on the Kullback-Leibler divergence, instead of a mixed oracle inequality involving both the Kullback-Leibler divergence and the Hellinger distance.

\subsection{Hidden Markov models}

A hidden Markov model is a stochastic process $(H_t, X_t)_t$ where only the observations $(X_t)_t$ are observed, such that the hidden process $(H_t)_t$ is a Markov chain and such that the $X_s$ are independent conditionally to $(H_t)_t$ with a distribution depending only on the corresponding $H_s$. These models have been widely used since their formalization by~\cite{baum1966hmm}, as they are able to account for complex dependencies in time processes while keeping a very simple and easily interpretable structure.

In this section, we consider finite state space hidden Markov model, in which the process $(H_t)_t$ takes values in a finite space $\Hcal$. The parameters of a hidden Markov model are the initial distribution $\pi$ and the transition kernel $Q$ of the hidden process $(H_t)_t$ as well as the emission densities, that is the family $\nu = (\nu_h)_{h \in \Hcal}$, where $\nu_h$ is the density of the distribution of $X_1$ conditionally to $H_1 = h$ w.r.t. the Lebesgue measure. The initial distribution $\pi$ typically cannot be exactly recovered, so, in general, the parameters we wish to estimate are $Q$ and $\nu$.

The closest result to ours in this setting is the one from~\cite{lehericy2021hmm_mle}, who proves an oracle inequality for a maximum likelihood estimator on misspecified hidden Markov models. Their assumptions and proofs are similar to ours, although they rely on tools that are specific to hidden Markov models to obtain their oracle inequality.

\paragraph{Validation of the assumptions}

In what follows, we only assume that the models are HMM, while the true distribution of $(X_t)_t$ may not be one. As such, we treat it separately, before introducing the models.

Concerning Assumption~\ref{hyp_tails} for the true distribution, the lower tails of $\log p^\star_t(X_t)$ are always automatically sub-exponential by direct application of Markov's inequality. The control of the upper tails follows from the assumption that the conditional densities of $(X_t)_t$ admit a finite moment, that is, there exists constants $\delta > 0$ and $M_\delta > 0$ such that almost surely,
\begin{equation}
\label{eq_HMM_hyp_tails_star}
\sup_{t \in [n]} \Ebb[ p^\star_t(X_t)^\delta \, | \, \Fcal_{t-1} ] \leq M_\delta < +\infty.
\end{equation}
Under this assumption, there exists $B^* > 0$ depending on $\delta$ and $M_\delta$ such that for all $y \geq 1$ and $t \in [n]$,
\begin{equation*}
\Pbb( |\log p^\star_t(X_t)| > B^* y \, | \, \Fcal_{t-1} ) \leq e^{-y}.
\end{equation*}

Let us now introduce the models. Let $C_Q > 0$ and $\alpha \geq 1$. For all $m$, let $h_m \in \Nbb^*$ be the number of hidden states of model $m$. Let $S_m = \{ g_\eta, \eta \in E_m \subset \Rbb^{e_m} \}$ be a parametric set of probability densities on $\Xcal$ such that $g_\eta \geq n^{-\alpha}$ for all $\eta$. The model $\Theta_m$ is defined as the set
\begin{align*}
\Theta_m = \Big\{
    (a, q, \eta) \in \Rbb^{h_m-1} \times & \Rbb^{h_m(h_m-1)} \times (E_m)^{h_m} \ \text{ s.t. } \\ 
        (C_Q \log n)^{-1} &\leq h_m a(i) \leq C_Q \log n \quad \text{ for all } i \in [h_m-1], \\
        (C_Q \log n)^{-1} &\leq h_m q(j,i) \leq C_Q \log n \quad\text{ for all } i \in [h_m-1] \text{ and } j \in [h_m], \\ 
    (C_Q \log n)^{-1} &\leq h_m \left(1 - \sum_{j=1}^{h_m-1} a(j)\right) \leq C_Q \log n \\
    \text{ and } (C_Q \log n)^{-1} &\leq h_m \left(1 - \sum_{j=1}^{h_m-1} q(i,j) \right) \leq C_Q \log n \quad \text{ for all } i \in [h_m]
    \Big\}.
\end{align*}
This model is of dimension $D_m = h_m e_m + h_m^2 - 1$, and the parameter $\theta = (a, q, \eta) \in \Theta_m$ is associated to the HMM parameters
\begin{equation*}
\left\{
\begin{aligned}
&\pi^m_\theta(i) = a(i) \qquad \text{for all } i \in [h_m-1], \\
&\pi^m_\theta(h_m) = 1 - \sum_{i=1}^{h_m-1} a(i), \\
&Q^m_\theta(i,j) = q(i,j) \qquad \text{for all } j \in [h_m-1] \text{ and } i \in [h_m], \\
&Q^m_\theta(i,h_m) = 1 - \sum_{j=1}^{h_m-1} q(i,j) \qquad \text{for all } i \in [h_m], \\
&\nu^m_{\theta,i} = g_{\eta_i} \qquad \text{for all } i \in [h_m].
\end{aligned}
\right.
\end{equation*}
The inequalities on the parameters $a$ and $q$ ensure that all entries of the initial distribution and transition matrices are between $(C_Q \log n)^{-1} h_m^{-1}$ and $(C_Q \log n) h_m^{-1}$.
Finally, the likelihood of the observations $X_1^n$ under the parameter $\theta \in \Theta_m$ is
\begin{equation*}
p^m_\theta(X_1^n) = \sum_{(i_1, \dots, i_n) \in [h_m]^n} \pi^m_\theta(i_1) \nu^m_{\theta, i_1}(X_1) \prod_{t=2}^n Q^m_\theta(i_{t-1}, i_t) \nu^m_{\theta, i_t}(X_t).
\end{equation*}

Given the upper and lower bounds on the initial distribution and transition matrix, Assumptions~\ref{hyp_tails_bounded} and \ref{hyp_tails} can be replaced by an assumption on the average $\bar{\nu}^m_{\theta} = \frac{1}{h_m} \sum_{i \in [h_m]} \nu^m_{\theta,i}$ of the emission densities $(\nu^m_{\theta,i})_{i \in [h_m]}$, since
\begin{align*}
p^m_{\theta,t}(X_t)
 &= \sum_{i,i' \in \Hcal} p^m_{\theta}(H_{t-1} = i | X_1^{t-1}) Q^m_\theta(i,i') \nu^m_{\theta,i'}(X_t) \\
 &\in \left[ (C_Q \log n)^{-1} \bar\nu(X_t) , (C_Q \log n) \bar{\nu}^m_{\theta}(X_t) \right].
\end{align*}
Thus, to check Assumption~\ref{hyp_tails}, we will control the tails of $\bar\nu^m_\theta(X_t)$. Since all the emission densities are lower bounded by $n^{-\alpha}$, it is enough to assume that for all $m \in \Mcal$, there exists $B_m' \geq 1$ such that for all $t \in [n]$,
\begin{equation}
\label{eq_HMM_hyp_tails_modeles}
\forall y \geq 1 \quad \Pbb\left( \sup_{\theta \in \Theta_m} \log \bar\nu^m_\theta(X_t) > B_m' y \, | \, \Fcal_{t-1} \right) \leq e^{-y},
\end{equation}
and then, if~\eqref{eq_HMM_hyp_tails_star} holds, Assumption~\ref{hyp_tails} is satisfied for $B_m = 2 (B^\star \vee B_m' \vee \log(C_Q \log n) \vee (\alpha \log n))$.

Assumption~\ref{densites_continues} holds as soon as the emission densities are continuous.

Assumptions~\ref{hyp_lipschitz_bounded} or~\ref{hyp_lipschitz} require some technical work to verify.
They are used in~\cite{aubert:hal-04526484} to ensure that the entropy of the class of log-densities is controlled. A weaker sufficient assumption for hidden Markov models is introduced in~\cite{lehericy2021hmm_mle} (Assumptions \textbf{[Aentropy]} and \textbf{[Agrowth]}) as well as their Section B.2 to see how it relates to the entropy of the class of log-likelihoods. These two assumptions, together with assuming that the mapping $\theta \in \Theta_m \longmapsto (\pi^m_\theta, Q^m_\theta)$ is Lipschitz, are enough to obtain an oracle inequality. Note that the approach they use in Section B.2 also provides a way to control the regularity of the mappings $\theta \longmapsto \log p^m_\theta$, in the sense that if the parameters of the HMM are Lipschitz and~\eqref{eq_HMM_hyp_tails_star} and~\eqref{eq_HMM_hyp_tails_modeles} hold, then $\theta \longmapsto \log p^m_\theta$ is $\alpha$-Hölder, with $\alpha \gg (\log n)^{-2-\epsilon}$ for any $\epsilon > 0$. A careful reading of our proofs and those of~\cite{aubert:hal-04526484} shows that the oracle inequalities remain valid when assuming the mapping $\alpha$-Hölder instead of Lipschitz in Assumptions~\ref{hyp_lipschitz} and~\ref{hyp_lipschitz_bounded}, up to replacing $D_m$ by $D_m/\alpha$.

Finally, the criterion $\Kbf$ used by~\cite{lehericy2021hmm_mle} is actually the limit of $\Ebb[\Kbf_n]$ when $n \rightarrow +\infty$.
Due to the upper and lower bounds on the entries of the transition matrices, the models forget the past exponentially fast, in the sense that $|\log p^m_\theta(X_t | X_{t-k}^{t-1}) - \log p^m_\theta(X_t | X_{t-k'}^{t-1})| \leq \rho^{(k \wedge k') - 1} / (1-\rho)$ where $\rho = 1 - (C_Q \log n)^{-2}$, see for instance~\cite[Lemma 15]{lehericy2021hmm_mle} or~\cite{douc2004asymptotic}. If a similar property holds for the true process (such as \textbf{[A$\star$forget]} in~\cite{lehericy2021hmm_mle}), then
\begin{equation*}
| \Kbf - \Ebb[\Kbf_n] | = \Ocal((\log n)^4 / n),
\end{equation*}
which is negligible compared to the remainder term of our oracle inequalities. Thus, we may use one or the other loss interchangeably.

\paragraph{Result}

Under the same assumptions as~\cite{lehericy2021hmm_mle}, our results provide an oracle inequality with a penalty that is also of order $(\log n)^\alpha \frac{D_m}{n}$ for some $\alpha > 0$. The main difference is that Assumptions \textbf{[A$\star$mixing]} and \textbf{[A$\star$forgetting]} of \cite{lehericy2021hmm_mle}, a $\rho$-mixing assumption used to obtain concentration inequalities and a forgetting assumption used to truncate the dependencies in the past respectively, are not required to apply our results, provided the loss $\Kbf_n$ is used instead of $\Kbf$.

\subsection{Models of neuronal networks in discrete-time \label{sec:infinite_chain}}

Neurons are electrical cells that communicate via the emission of action potentials, also called spikes \cite{antonio-book}. The shape of the action potential is essentially constant and the important point is the time at which the spikes are emitted. As such, the network is represented by a process $(X_t^i)_{t\in \Zbb, i\in I}$, where $I$ is the set of all neurons constituting the network and $X_t^i=1$ if the neuron $i$ spikes at time $t$ and $X_t^i=0$ otherwise. We consider the filtration $\Fcal_t = \sigma((X^i_s)_{s\leq t,i\in I})$.

Since communication between neurons is not instantaneous, most authors \cite{antonio-book,OstRB} usually assume that conditionally to $\Fcal_{t-1}$, the $(X_t^i)_{i \in I}$ are independent, so that the whole activity can be described by just giving the $p^{\star,i}_t$'s with 
\begin{equation*}
\forall i \in I, \  \forall t \in \Zbb, \quad p^{\star,i}_t = \Pbb(X_t^i=1 \, | \, \Fcal_{t-1}).
\end{equation*}
We assume the process to be stationary.

One of the main neuronal model in discrete-time is the discrete Hawkes process \cite{OstRB}, which can be modeled by 
\begin{equation*}
p^{i,H}_t=\phi_i\left(\sum_{j\in I}\sum_{s<t} h_{j\to i}(t-s)X_s^j\right), 
\end{equation*}
where $\phi_i$ is a rate function that is usually assumed to be Lipschitz, increasing and taking values in $[0,1]$, and where $h_{j\to i}$ are interaction functions: if it is positive at delay $\delta$, neuron $j$ excites neuron $i$ after a delay $\delta$; if it is negative, neuron $j$ inhibits neuron $i$ after a delay $\delta$.
For instance, the linear case is the situation where $\phi_i(x) = \mu_i+x$. In the sequel, to simplify, $\phi_i(.)$ is supposed to be fixed and known. Only the functions $h_{j\to i}$ are unknown.

The Galves-L\"ocherbach neuronal model \cite{antonio-book, galves2013infinite} is slightly different, here
\begin{equation*}
p^{i,GL}_t=\phi_i\left(\sum_{j\in I}\sum_{s=L^i_t}^{t-1} h_{j\to i}(t-s)X_s^j\right), 
\end{equation*}
where $L^i_t$ is the time of the last spike of neuron $i$. In contrast to the Hawkes process, the neurons of this model essentially reset their memory each time they spike.

In practice, only a small finite subset $F$ of $I$ is observed on a finite time duration, say $t=-A,...,n$, for some positive $A$. For a fixed $i\in F$, we are interested by estimating $(p^{\star,i}_t)_{t\in [n]}$ based on the observations of $(X_t^j)_{-A\leq t\leq n, j\in F}.$

We are interested in a specific neuron of interest $i\in F$, so the process $(X_t)_{t \in [n]}$ from our oracle inequalities is taken to be $(X^i_t)_{t=1,...,n}$. The filtration $\Fcal_t$ is the one defined above and generated by the whole network. Finally, in order to define the models, we have access to more information that $(X^i_t)_{t=1,...,n}$ but less than the whole network: we may only use the observations $(X^j_t)_{j\in F, t=-A,...,n}$, which are indeed $\Fcal_t$ adapted.

Whatever the neuronal model ($p^{H}$ for Hawkes or $p^{GL}$ for Galves-L\"ocherbach) that we choose, we need to parameterize it. We define model $m$ by choosing a finite subset of $F$, called $V_m$, which is the proposed neighborhood for neuron $i$ in model $m$, and by choosing $A_m \leq A$ a maximal lag of interaction. In model $m$, all the $h_{j\to i}(u)$ are null if $j \notin V_m$ or $u>A_m$, so the model is parameterized by $(\theta_{j,u}:=h_{j\to i}(u))_{j\in V_m, u=1,...,A_m} \in \Rbb^{A_m|V_m|}$. Given this parameterization, the conditional distributions are defined by
\begin{equation*}
p^{i,m,H}_t=\phi_i\left(\sum_{j\in V_m}\sum_{u=1}^{A_m} \theta_{j,u} X^j_{t-u}\right)
\end{equation*}
and
\begin{equation*}
p^{i,m,GL}_t=\phi_i\left(\sum_{j\in V_m}\sum_{u=1}^{\min(A_m,t-L^i_t)} \theta_{j,u} X^j_{t-u}\right).
\end{equation*}

\paragraph{Validation of the assumptions}

Assumption~\ref{hyp_tails_bounded} means that $p^{\star,i}_t$ as well as $p^{i,m,H}_t$ or $p^{i,m,GL}_t$ are in $[\varepsilon,1-\varepsilon]$. This is a very common assumption in these settings (see \cite{duarte2019estimating,OstRB}). In this sense, Assumption~\ref{hyp_tails} can be seen as a relaxation with respect to previous works.

The assumption that $\phi_i$ is Lipschitz with constant $L$ is a very classical one \cite{duarte2019estimating}. Together with Assumption~\ref{hyp_tails_bounded}, it implies that $\log(\phi_i)$ (probability of a spike) and $\log(1-\phi_i)$ (probability of no spike) are Lipschitz with constant $2\varepsilon^{-1} L$. Thus, the first part of Assumption~\ref{hyp_lipschitz_bounded} is satisfied with $L_m=2\varepsilon^{-1}L$ and the $\ell_1$ norm $\|\theta\|_1$.

For the second part of Assumption~\ref{hyp_lipschitz_bounded}, it depends on $\phi_i$. Indeed, since $\phi_i$ is increasing, we can define
\begin{equation*}
\Theta_m=\left\{\theta \in \Rbb^{A_m|V_m|}\mbox{ such that } \varepsilon\leq \phi_i\left(\sum_{j,u} \theta_{j,u} {\bf 1}_{\theta_{j,u}<0}\right) \mbox{ and }\phi_i\left(\sum_{j,u} \theta_{j,u} {\bf 1}_{\theta_{j,u}>0}\right)\leq 1-\varepsilon \right\},
\end{equation*}
to ensure that Assumption~\ref{hyp_tails_bounded} is satisfied. Since $\phi_i$ is increasing and Lipschitz, we can define its inverse, so that if $\varepsilon$ and $1-\varepsilon$ are possible values for $\phi_i$ (as is typically the case for linear or sigmoid functions) then it automatically follows that for all $\theta\in \Theta_m$
\begin{equation*}
\|\theta\|_1\leq |\phi_i^{-1}(\varepsilon)|+|\phi_i^{-1}(1-\varepsilon)|,
\end{equation*}
and so the second part of Assumption~\ref{hyp_lipschitz_bounded} is satisfied with $M_m= |\phi_i^{-1}(\varepsilon)|+|\phi_i^{-1}(1-\varepsilon)|$.

Assumption~\ref{densites_continues} is automatically fulfilled in each of the models because $p^{i,m,H}_t$ and $p^{i,m,GL}_t$ only depend on a finite set of $X^j_s$. Assumption~\ref{densites_continues} for $p^\star$ can be solved by assuming the following continuity assumption, which is standard in this setting (see \cite{duarte2019estimating,antonio-book}). Let $x$ be a past configuration, i.e. a possible value for $(X^j_s)_{j\in I, s<t}$, and let us remark that by stationarity, $p^{\star,i}_t$ can be seen as a function of $x$ and not of $t$:
\begin{equation*}
p^{\star,i}(x)=\Pbb(X^i_t=1|(X^j_s)_{j\in I,s<t}=x).
\end{equation*}
The continuity assumption of the neuronal model assumes that there exists a nested sequence $(S_k)_{k \geq 1}$ of finite subsets of $I\times \mathbb{Z}_-$ such that $S_k \underset{k \to \infty}{\longrightarrow} I \times \mathbb{Z}$ and such that 
\begin{equation*}
\sup\{|p^{\star,i}_t(x)-p^{\star,i}_t(y)| \text{ such that } x_{|S_k}=y_{|S_k}\} \underset{k \to \infty}{\longrightarrow} 0,
\end{equation*}
where $x_{|S_k}$ is the configuration restricted to the indices in $S_k$.
Informally, this continuity assumption states that one can approximate $p^{\star,i}(x)$ by what happens on a finite number of $x^j_s$'s.
\paragraph{Result}

Following Theorem \ref{generalmodelselectiontheorem_bounded} or its corollary, and since $A_m$ does not depend on $m$, one can take
\begin{equation*}
pen(m)=\Ocal\left( \log(n)^2 \frac{D_m}{n}\right).
\end{equation*}
Note that under mild conditions (see for instance \cite{antonio-book} or \cite{OstRB}), these processes are stationary and $\Ebb(\Kbf_n(p_\theta^m))$ does not depend on $n$. 

Furthermore, since $-\log(p^m_{\theta,t}/p^\star_t)$ is upper and lower bounded, one can easily show that 
\begin{equation}
\label{petittruc}
\Ebb\left(\log^2 \frac{p_t^\star(X_t)}{p_t(X_t)}\, \Big| \Fcal_{t-1}\right) \lesssim \Ebb\left[ \log \frac{p_t^\star(X_t)}{p_t(X_t)} \, \Big| \Fcal_{t-1} \right] \lesssim \Ebb\left(\log^2 \frac{p_t^\star(X_t)}{p_t(X_t)}\, \Big| \Fcal_{t-1} \right),
\end{equation}
where $\lesssim$ means that the inequality holds up to positive multiplicative constant.

Moreover, whatever the neuronal model (Hawkes or Galves-L\"ocherbach), we can expand $\theta^m\in \Theta^m$ with zeroes so that it is defined on $I \times \Nbb^*$. If $\phi_i$ has a derivative that is upper and lower bounded by some positive constant, so do $\log(\phi_i)$ and $\log(1-\phi_i)$, and therefore, whatever the value of $X_t$, 
\begin{equation*}
\left|\sum_{j\in I}\sum_{s<t} (h_{j\to i}(t-s)-\theta^m_{j,t-s})X_{s}^j \right|\lesssim \left|\log\left(\frac{p^{m}_{\theta^m,t}(X_t)}{p^{*}_t(X_t)}\right)\right| \lesssim \left| \sum_{j\in I}\sum_{s<t} (h_{j\to i}(t-s)-\theta^m_{j,t-s})X_{s}^j \right|
\end{equation*}
(for the Hawkes case and with a restricted sum in the lower bound in the Galves-L\"ocherbach case).
Note that both upper and lower bounds are $\Fcal_{t-1}$ measurable.
Hence, going back to the oracle inequality, we can express both the upper bound and the lower bound in terms of the average square distance 
\begin{equation*}
\frac{1}{n}\sum_{t=1}^n\left| \sum_{j\in I}\sum_{s<t} (h_{j\to i}(t-s)-\theta^m_{j,t-s})X_{s}^j \right|^2
\end{equation*}
and the upper bound of the oracle inequality is a trade-off between the bias measured by the average square distance above and the penalty in $D_m/n$ up to logarithmic terms.

Let us compare this result to the ones in \cite{OstRB} for Hawkes and Galves-L\"ocherbach process. In \cite{OstRB}, the authors could only envision linear models (i.e. $\phi_i$ is linear) and were using least-square loss on the $p$. They can perform variable selection thanks to an $\ell_1$ penalization, whereas in our case the summability condition on $\Sigma$ makes it impossible to perform variable selection by considering the full set of subsets of variables. Despite this difference, their oracle inequality is for the exact same average square distance as mentioned above with, in the upper bound, a trade-off between the bias and a term in $D_m/n$ up to logarithmic terms. However, the dimension $D_m$ in their case, had to be smaller than a given a priori level of sparsity $s$. Moreover, their constant in front of $D_m$ was given by an RE inequality on the Gram matrix. In the most general case considered by \cite{OstRB}, this constant depend on the size of the observed network $F$ and explodes with the size of $F$; in the Hawkes case, this bound depends on $s$, and explodes for moderate $s$, whereas in our case the penalty can handle large $D_m$ thanks to the summability condition in $\Sigma$ which ensures that the number of models remains reasonable.

For the Galves-L\"ocherbach model, we can also compare this result with the ones of \cite{duarte2019estimating}. In \cite{duarte2019estimating} (see also \cite{piccioni} for recent improvements), the authors envision the estimation of the interaction neighborhood of a neuron $i$. In these two articles, the authors assume that the set of observed neurons $F$ contains the interaction neighborhood of $i$. In other words, with their assumptions on the process, at least one model is well specified. In this case, our result shows that we can estimate the conditional probability of spiking for $i$ with parametric rate in $D_m/n$ up to logarithmic terms, as the Wilks phenomenon would predict if it was applicable in this case, but without even knowing which model is the true one up front. However, it is not clear if this means that the neighborhood of interaction is correctly estimated by $V_{\hat{m}}$. We cannot manage variable selection per se because the complexity of the family of models would be too large for our general result. But we can at least hope that $V_{\hat{m}}$ would contain the true interaction neighborhood of neuron $i$ with high probability \footnote{This would probably require additional assumptions, such as minimal strength of the interaction inside the neighborhood as in \cite{duarte2019estimating} and \cite{piccioni}.}. On the other hand, our procedure is much less computationally intensive than theirs and does not require the precise examination of patterns of 0 and 1's as they do, which in practice usually prevents them to use it with more than 4 observed neurons \cite{brochini2016estimation}. As such, our method could at least be used to restrict the set of observed neurons before using the methods developed in \cite{duarte2019estimating, piccioni}.


\subsection{Adversarial multi-armed bandits as a cognitive learning model}

All the previous applications were in the stationary case, so that we were able to compare our results with existing ones, even if per se our oracle inequalities do apply even if we do not assume stationarity. Let us now look at a setup which cannot be stationary: learning data, in which we aim to estimate how an individual or system learns to perform a task by observing their training as it takes place. In such a setting, the data that are produced cannot, by essence, be independent or stationary. If in practice many authors have advertised for the use of MLE \cite{daw2011trial, wilson2019ten}, this problem was for the first time studied theoretically in \cite{aubert23}. In this previous work, only the property of the MLE on one model is studied, without the model selection part.

The model used in~\cite{aubert23} is the \texttt{Exp3} algorithm \cite{auer2002nonstochastic}. In the Machine Learning literature, this algorithm was originally proposed to solve an the adversarial multi-armed bandit problem \cite{BubeckBianchi}, \cite{lattimore_szepesvári_2020}, that is a game played sequentially between a learner and an adversary (the environment), where at each round the learner must choose an action $k$ from a set of actions $[K]$ for some integer $K \geq 1$ and the adversary returns a loss $g_{k,t}$ for this action.

\begin{algorithm}[H]
\caption{\texttt{Exp3} with learning rate $\eta$}\label{alg_exp3}
\begin{algorithmic}
\State \textbf{Initialization: } $p_{\eta,1}= \left(\frac{1}{K}, \ldots, \frac{1}{K}\right)$.
\For {$t=1,2,\ldots$}
\State Draw an action $X_t \sim p_{\eta,t}$ and if $X_t=k$ receive a loss $g_{t,k} \in [0,1]$.
\State Update for all $k \in [K]$:
\begin{equation*}
 p_{\eta,t+1}(k) = \frac{\exp \left(-\eta \sum_{s=1}^t \hat{L}_{\eta,s}(k)\right)}{\sum_{j = 1}^K \exp \left(-\eta \sum_{s=1}^t \hat{L}_{\eta,s}(j)\right)} \qquad \text{ where } \hat{L}_{\eta,s}(j) = \frac{g_{j,s}}{p_{\eta,s}(j)} \one_{X_s=j}
\end{equation*}
\EndFor
\end{algorithmic}
\end{algorithm}

The fact that the learner plays as if the environment is adversarial makes it a realistic model for cognitive processes where humans or animals need to adapt to a changing environment. However in a cognitive experiment, most of the time, the environment is not adversarial even if the learner does not know that and still uses an adversarial strategy.

\paragraph{Comparison with \cite{aubert23}}

\cite{aubert23} consider only cases where $g_{k,t}=g_k$ depends only on $k$, which is common in cognitive experiments. It has been shown that if the learning rate $\theta$ is fixed, no estimator can achieve polynomial rates of convergence. This is mainly due to the fact that $p_{\eta,t}(k)$ can rapidly become absurdly small, and as a consequence, only an extremely small number of $X_t$'s are actually relevant to perform the estimation. For instance, if an individual has finished learning and no longer makes mistakes, then it is impossible to improve our estimation of their learning behaviour even with continued observation. Therefore, in \cite{aubert23}, another asymptotic in $T$ is proposed. The goal is to estimate $\theta\in [r,R]$ such that $\eta=\theta/\sqrt{T}$ based on the first
\begin{equation*}
T_\varepsilon=\Big\lfloor \left(\frac{1}{K}-\varepsilon\right)\frac{\sqrt{T}}{R}\Big\rfloor,
\end{equation*}
observations. Indeed up to $T_\epsilon$, all the $p_{\frac{\theta}{\sqrt{T}},t}(k)$ are larger than $\varepsilon$, a fixed constant in $(0,1)$ meant to prevent the vanishing of the various probabilities. The choice of the rate $\sqrt{T}$ can be relaxed to other powers of $T$ but for the present illustrative purpose, we only consider this case, which also corresponds to the renormalization of the learning rate for which \texttt{Exp3} satisfies sublinear regret bounds \cite{BubeckBianchi}.

With this renormalization, \cite{aubert23} proved a convergence in $T_\epsilon^{-1/2}$ for 
\begin{equation*}
\frac{1}{T_\varepsilon}\sum_{t=1}^{T_\varepsilon}\sum_{k=1}^K (p_{\frac{\theta}{\sqrt{T}},t}(k)-p_{\frac{\hat{\theta}}{\sqrt{T}},t}(k))^2,
\end{equation*}
with $\theta$ the true parameter and $\hat{\theta}$ the MLE.

Let us compare this with Theorem \ref{generalmodelselectiontheorem_bounded} or its corollary in the case of a single, well specified, model. Take $\mathcal{F}_t=\sigma(X_1^t)$ and $n=T_\varepsilon$, so that Assumption~\ref{hyp_tails_bounded} is directly satisfied. Since $\log$ is Lipschitz on $[\varepsilon,1-\varepsilon]$, Assumption~\ref{hyp_lipschitz_bounded} follows as a direct consequence of Lemma 4.3 of \cite{aubert23}, with respect to $\|\theta\|_\infty$, with a constant $L$ that only depends on $R$ and $\varepsilon$. Moreover we can always take $M=R$. Finally, Assumption~\ref{densites_continues} is trivial since there is only a finite set of values for $X_t$. 

Thus, Theorem\ref{generalmodelselectiontheorem_bounded} shows that $\Kbf_n(\tilde{p})$ is of order $\log(T_\varepsilon)^2/T_\varepsilon$, which is faster than \cite{aubert23}. Indeed $\Kbf_n(p)$ is comparable to the square norm used in \cite{aubert23}, which can be proved as follows:
\begin{equation*}
\Kbf_n(p)
    = \frac{1}{T_\varepsilon}\sum_{t=1}^\varepsilon \sum_{k=1}^K p^\star_t(k) \log \frac{p^\star_t(k)}{p_t(k)} 
    = \frac{1}{T_\varepsilon}\sum_{t=1}^\varepsilon \sum_{k=1}^K p^\star_t(k) \phi(\log \frac{p_t(k)}{p^\star_t(k)}),
\end{equation*}
with $\phi(u)=e^u-u-1$. Since $|\log \frac{p^\star_t(k)}{p_t(k)}|$ is bounded, $\phi(u)\gtrsim u^2$. This leads to 
\begin{equation*}
\Kbf_n(p) 
    \gtrsim \frac{1}{T_\varepsilon}\sum_{t=1}^\varepsilon \sum_{k=1}^K p^\star_t(k) \log^2 \frac{p^\star_t(k)}{p_t(k)}
    \gtrsim \frac{1}{T_\varepsilon}\sum_{t=1}^\varepsilon \sum_{k=1}^K (p^\star_t(k)-p_t(k))^2,
\end{equation*}
since the derivative of the logarithm is positive and lower bounded on $[\varepsilon, 1-\varepsilon].$
\paragraph{A model selection framework}
Theorem \ref{generalmodelselectiontheorem_bounded} or its corollary goes further than \cite{aubert23} by also allowing for model selection and bias. In particular, this can allow for reward estimation as well as identifying the granularity or precision the learner is able to have in its actions.
This is of particular importance when performing a cognitive learning experiment, since it is rarely clear what numerical value to assign to a real-life reward to reflect how much of an impact it has on the learner (for instance, how much is crab meat worth to an octopus in comparison to shrimp?), nor how precise the learner is able to be when picking an action, in the case where the number of possible actions is continuous or very large (since it is impossible to perfectly replicate a given action; near identical actions should provide near identical rewards, so how dissimilar do actions have to be to be considered distinct?). 

One possibility is to use \texttt{Exp3} with a different parameterization. For a given model $m$, consider a partition $\mathcal{I}^m$ of the set of possible actions (which may even be continuous). Let $D_m$ be the number of disjoint sets in $\mathcal{I}^m$. The parameters of model $m$ are the rewards of each possible set $I$ of the partition, that we call $g^m_I$. Since the role of the learning parameter $\eta$ introduced in Algorithm~\ref{alg_exp3} is redundant with the one of the (unknown) rewards $g$'s, we remove it from the model. 
Thus, under model $m$, the learner proceeds as follows:
\begin{itemize}
\item Initialize $p^m_1=(1/D_m,...,1/D_m)$
\item for $t\geq 1$,
\begin{itemize}
\item pick the set $I_t$ according to $p^m_t$ and pick $X_t$ inside $I_t$ uniformly
\item update for all $I\in\mathcal{I}^m $
\begin{equation*}
p^m_{t+1}(I)= \frac{\exp \left(- \sum_{s=1}^t \hat{L}^m_{s}(I)\right)}{\sum_{J = 1}^{D_m} \exp \left(- \sum_{s=1}^t \hat{L}^m_{s}(J)\right)} \qquad \text{ where } \hat{L}^m_{s}(J) = \frac{g_J^m}{p_{\eta,s}(j)} \one_{X_s\in J}
\end{equation*}
\end{itemize}
\end{itemize}

To obtain a meaningful renormalization, as before, we call $p^m_{\theta^m,t}$ the distribution of $X_t$ given $\mathcal{F}_{t-1}=\sigma(X_1^{t-1})$, when $g_J^m=\frac{\theta_J^m}{\sqrt{T}}$.

As in the previous single model case, Assumption~\ref{hyp_tails_bounded} is straightforward by choosing $n$ of the order of $\sqrt{T}$ up to multiplicative constants. We can also adapt Lemma 4.3 of \cite{aubert23} to show that Assumption~\ref{hyp_lipschitz_bounded} is satisfied as long as all the $\theta_J^m$'s are in $[r,R]$, with a Lipschitz constant $L$ with respect to $\|\theta\|_\infty$ which does not depend on $m$. One can also take $M_m=R$. Assumption~\ref{densites_continues} is again easily satisfied even if the set of actions is continuous.

Therefore, one can choose a penalty of order
\begin{equation*}
pen(m)=\Ocal\left(\log(T_\epsilon)^2\frac{D_m}{T_\varepsilon}\right),
\end{equation*}
and obtain an oracle inequality on $\Kbf_n(\tilde{p})$, which as seen above can be implies an oracle inequality on the square distance. In the end, we obtain a way to estimate at the same time 
\begin{itemize}
    \item with the partition $\mathcal{I}_{\hat{m}}$: the precision in the perception and execution of the actions, that is, which sets of actions are conflated and which are considered distinct, and how precise the learner is able to be when choosing its actions,
    \item with the resulting $\hat{\theta}^{\hat{m}}_J/\sqrt{T_\varepsilon}$: the estimated reward, by which we mean the numerical value that quantifies the average impact of the outcome of the action on the learner's behaviour, modeled by a piecewise constant function on the partition $\mathcal{I}_{\hat{m}}$.
\end{itemize}

\section*{Acknowledgments}

This work was supported by the French government, through the $\text{UCA}^\text{Jedi}$ and 3IA Côte d'Azur Investissements d'Avenir managed by the National Research Agency (ANR-15- IDEX-01 and ANR-19-P3IA-0002),  by the interdisciplinary Institute for Modeling in Neuroscience and Cognition (NeuroMod) of the Université Côte d'Azur and  directly by the National Research Agency ANR-08-JCJC-0125-01, ANR-19-CE40-0024 with the ChaMaNe project, MITI AAP "défi modélisation du vivant" with DYNAMO project and AAP 80PRIME with eXpLAIn. It is part of the \href{https://neuromod.univ-cotedazur.eu/computabrain}{ComputaBrain project}.

\bibliographystyle{apalike}
\bibliography{bibliography}

\appendix

\section{Proof of Theorems~\ref{generalmodelselectiontheorem_bounded} and~\ref{generalmodelselectiontheorem}}\label{sectionproofgeneralmodelselection}

Both theorems share the same proof, up to differences in notations due to picking Assumptions~\ref{hyp_tails_bounded} and~\ref{hyp_lipschitz_bounded} or Assumptions~\ref{hyp_tails} and~\ref{hyp_lipschitz}, which will be introduced when required.


In the sequel to avoid losing track of important dependance, we denote: for any $t \in [n]$, $m \in \Mcal$, $\theta \in \Theta_m$, $x_t \mapsto p^m_{\theta,t}(x_t |\Fcal_{t-1})$ the density of $X_t$ conditionally to $\Fcal_{t-1}$ under the parameter $\theta$, and likewise for $p^\star_t(x_t |\Fcal_{t-1} )$.

In what follows, fix $m \in \Mcal$ and $\Bar{\theta}^m \in \Theta_m$. Consider the following functions, defined for all $t \in [n]$, $x_1^t \in \Xcal^t$, $m' \in \Mcal$ and $\delta \in \Theta_{m'}$ by
\begin{equation*}
g_{\delta,t}^{m'}(x_t, \Fcal_{t-1})
 = - \log\left(\frac{p_{\delta,t}^{m'}(x_t | \Fcal_{t-1})}{p^\star_t(x_t | \Fcal_{t-1})}\right)
\end{equation*}
and write $g_\delta^{m'} = (g_{\delta,t}^{m'})_{t \in [n]}$.

For all $m' \in \Mcal$, let $\hat{\theta}^{m'}$ be a maximizer of $\theta \in \Theta_{m'} \mapsto \frac{1}{n} \ell_n(\theta)$, and let $\crit(m')$ be
\begin{equation*}
 \crit(m') = - \frac{1}{n} \ell_n(\hat{\theta}^{m'}) + \pen(m'),
\end{equation*}
and define the set $\Mcal'$ as
\begin{equation*}
 \Mcal' = \{m' \in \Mcal, \quad \crit(m') \leq \crit(m)\}.
\end{equation*}
For any family $h = (h_t)_{t \in [n]}$ of functions $X_t$ that may depend on the past, that is $h_t(X_t,\Fcal_{t-1})$, we write 
\begin{equation*}
    \left\{
    \begin{aligned}
        & P(h) = \frac{1}{n} \sum_{t=1}^n h_t(X_t, \Fcal_{t-1}), \\
        & C(h) = \frac{1}{n} \sum_{t=1}^n \Ebb[h_t(X_t, \Fcal_{t-1}) \, | \, \Fcal_{t-1}] 
        \quad \text{the compensator of } P(h), \\
        & \nu(h) = P(h) - C(h)
        = \frac{1}{n} \sum_{t=1}^n \left(h_t(X_t, \Fcal_{t-1}) - \Ebb[h_t(X_t, \Fcal_{t-1}) \, | \,\Fcal_{t-1}] \right). 
    \end{aligned}
    \right.
\end{equation*}
By definition, for every $m' \in \Mcal'$,
\begin{equation*}
P(g_{\hat{\theta}^{m'}}^{m'}) + \pen(m')
 \leq P(g_{\hat{\theta}^{m}}^{m}) + \pen(m)
 \leq P(g_{\Bar{\theta}^{m}}^{m}) + \pen(m).
\end{equation*}
Therefore, since $P = C + \nu$, for every $m' \in \Mcal'$,
\begin{equation*}
C\left(g_{\hat{\theta}^{m'}}^{m'}\right) + \nu(g_{\hat{\theta}^{m'}}^{m'}) 
 \leq C\left( g_{\Bar{\theta}^m}^m\right) + \pen(m) + \nu(g_{\Bar{\theta}^m}^m) - \pen(m').
\end{equation*}
Plugging the definition of $g_{\Bar{\theta}^m}^m$ in the above leads to
\begin{equation}\label{massartdebut}
\Kbf_n(p_{\hat{\theta}^{m'}}^{m'})
 \leq \Kbf_n(p_{\Bar{\theta}^m}^{m})
 + \pen(m) - \nu(g_{\hat{\theta}^{m'}}^{m'}) - \pen(m')+\nu(g_{\Bar{\theta}^m}^m).
\end{equation}

So far everything is similar to \cite{massart2007concentration}. The goal is now to control $-\nu(g_{\hat{\theta}^{m'}}^{m'})$.

Let us distinguish between the two sets of assumptions. Since the core of the proof is identical in both cases, we introduce notations to encompass both cases in a single setting.
\begin{itemize}
\item Under Assumptions~\ref{hyp_tails_bounded} and~\ref{hyp_lipschitz_bounded}, for each $t \in [n]$ and $m'' \in \Mcal$, let $q_n = 0$, $F^\infty_{m''} = 2 \log (\varepsilon^{-1})$ and $F^{\text{lip}}_{m''} = L_{m''}$, and for any $\delta \in \Theta_{m''}$, let
\begin{equation*}
\Vbf_n(p^{m''}_\delta)
 = \frac{1}{n} \sum_{t=1}^n \Ebb\left[
 \left( \log 
 \frac{p_t^\star(X_t | \Fcal_{t-1})}{p^{m''}_{\delta,t}(X_t | \Fcal_{t-1})} \right)^2
 \, \Big| \,\Fcal_{t-1} \right].
\end{equation*}

\item Under Assumptions~\ref{hyp_tails} and~\ref{hyp_lipschitz}, for each $t \in [n]$ and $m'' \in \Mcal$, let $q_n = 2n^{-1}$, $F^\infty_{m''} = B_{m''} \log n$ and $F^{\text{lip}}_{m''} = L_{m''} \log n$, and for any $\delta \in \Theta_{m''}$, let
\begin{equation*}
\Vbf_n(p^{m''}_\delta)
    = \frac{1}{n} \sum_{t=1}^n \Ebb\left[
        \left( \log 
        \frac{p_t^\star(X_t | \Fcal_{t-1})}{p^{m''}_{\delta,t}(X_t |\Fcal_{t-1})} \right)^2
        \one_{\left|\log 
        \frac{p_t^\star(X_t | \Fcal_{t-1})}{p^{m''}_{\delta,t}(X_t |\Fcal_{t-1})}
        \right| \leq F^\infty_{m''}}
     \, \Big| \, 
     \Fcal_{t-1}
 \right].
\end{equation*}
\end{itemize}

Let
\begin{equation}
\label{eq_def_vw}
A_{m'} = F^{\text{lip}}_{m'} M_{m'} + F^\infty_{m'}
 \quad \text{and} \quad
 v_{m'} = A_{m'} \sqrt{2n}.
\end{equation}

For any $m' \in \Mcal$, let $\sigma_{m'}$ be the solution of the equation
\begin{equation}\label{defsigmam}
\sigma
 = \left(1 \wedge \frac{v_{m'}}{\sigma} \right) \sqrt{(D_{m'}+1) \log \left(\frac{v_{m'}}{\sigma} \vee e \right)}
 + \frac{A_{m'}}{\sigma} (D_{m'}+1) \log \left(\frac{v_{m'}}{\sigma} \vee e \right)
\end{equation}

\begin{lemma}
\label{controlofdiffnufm}
Assume Assumption~\ref{densites_continues} holds, as well as either Assumptions~\ref{hyp_tails_bounded} and~\ref{hyp_lipschitz_bounded} or Assumptions~\ref{hyp_tails} and~\ref{hyp_lipschitz}.
For any family $(\eta_{m'})_{m' \in \Mcal}$ taking values in $(0,1)$, letting $y_{m'} = \eta_{m'}^{-1} \sqrt{\sigma_{m'}^2 + x+D_{m'}}$ for each $m' \in \Mcal$, it holds with probability at least $1- q_n - \displaystyle 6 e^{-x} \sum_{m' \in \Mcal} \log(v_{m'}) e^{-D_{m'}}$ that for all $m' \in \Mcal$,
\begin{equation}\label{eqeta2}
\sup_{\delta \in \Theta_{m'}} \left(\frac{|\nu(g_{\delta}^{m'})|}{2\Vbf_n(p_{\delta}^{m'}) + \frac{1}{n} y_{m'}^2}\right)
 \leq 80 (2 \eta_{m'} + \eta_{m'}^2 A_{m'}).
\end{equation}
\end{lemma}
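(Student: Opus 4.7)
The plan is to apply directly the master supremum/ratio inequality from \cite{aubert:hal-04526484} to the indexed martingale $(\nu(g_\delta^{m'}))_{\delta \in \Theta_{m'}}$ for each fixed $m' \in \Mcal$, and then to conclude by a union bound over $\Mcal$. Fix $m' \in \Mcal$. By construction, $(g_{\delta,t}^{m'}(X_t, \Fcal_{t-1}) - \Ebb[g_{\delta,t}^{m'}(X_t,\Fcal_{t-1}) \mid \Fcal_{t-1}])_{t \in [n]}$ is a sequence of martingale increments whose predictable quadratic variation is essentially $\Vbf_n(p_\delta^{m'})$ (under Assumption~\ref{hyp_tails_bounded} exactly, and under Assumption~\ref{hyp_tails} only after truncating the rare event where $|\log(p^\star_t/p^{m'}_{\delta,t})| > F^\infty_{m'}$; this truncation costs a probability $q_n = 2n^{-1}$, which is precisely how the term $q_n$ enters the probability bound). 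The class $\{g_\delta^{m'} : \delta \in \Theta_{m'}\}$ is $F^{\text{lip}}_{m'}$-Lipschitz in $\delta$ for the norm $\|\cdot\|_{m'}$ of diameter $M_{m'}$, and it is uniformly bounded (resp.\ with high probability bounded) by $F^\infty_{m'}$, the conjunction being quantified by $A_{m'} = F^{\text{lip}}_{m'} M_{m'} + F^\infty_{m'}$.

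Next, I would apply the stochastically renormalized supremum inequality of \cite{aubert:hal-04526484}, taking as envelope $F^\infty_{m'}$, as Lipschitz constant $F^{\text{lip}}_{m'}$ and noise level $v_{m'} = A_{m'}\sqrt{2n}$. That theorem produces, for any $\eta_{m'} \in (0,1)$ and any $y_{m'} > 0$, a bound of the form
\begin{equation*}
\sup_{\delta \in \Theta_{m'}} \frac{|\nu(g_\delta^{m'})|}{2\Vbf_n(p_\delta^{m'}) + y_{m'}^2/n}
 \leq 80(2\eta_{m'} + \eta_{m'}^2 A_{m'}),
\end{equation*}
with probability at least $1 - 6 \log(v_{m'}) e^{-D_{m'}-x}$, provided $y_{m'}$ dominates the Dudley-type fixed-point quantity defined in~\eqref{defsigmam}, namely $y_{m'} \geq \eta_{m'}^{-1}\sqrt{\sigma_{m'}^2 + x + D_{m'}}$. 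The measurability needed to define pointwise suprema of the densities is ensured by Assumption~\ref{densites_continues}. The fixed-point $\sigma_{m'}$ is exactly the quantity that balances the entropy of a $D_{m'}$-dimensional, $M_{m'}$-wide, $F^{\text{lip}}_{m'}$-Lipschitz class against the noise level $v_{m'}$: at scale $\sigma$, a minimal cover of $\Theta_{m'}$ has cardinality of order $(v_{m'}/\sigma)^{D_{m'}}$, and the sub-Gaussian/sub-exponential chaining then contributes the two additive terms in~\eqref{defsigmam} (a $\sqrt{D_{m'}\log(v_{m'}/\sigma)}$ sub-Gaussian part and an $A_{m'} D_{m'}\log(v_{m'}/\sigma)/\sigma$ sub-exponential part). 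The $\log(v_{m'})$ prefactor in the failure probability is the price of the peeling argument over dyadic slices of $\Vbf_n(p_\delta^{m'})$ in $[1/n, v_{m'}^2]$ that is used to turn the usual Bernstein tail into a ratio bound in $\Vbf_n$.

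Finally, a union bound over $m' \in \Mcal$ sums the failure probabilities to $6 e^{-x} \sum_{m'} \log(v_{m'}) e^{-D_{m'}}$, and the truncation step in the case of Assumption~\ref{hyp_tails} contributes the extra $q_n$; this gives the announced probability $1 - q_n - 6 e^{-x}\sum_{m'}\log(v_{m'})e^{-D_{m'}}$. The principal technical difficulty is the second paragraph above, where one must check that the Lipschitz and envelope hypotheses at the level of $g_\delta^{m'}$ produce exactly the entropy bound used in \cite{aubert:hal-04526484} and that the peeling in $\Vbf_n$ yields the $\log(v_{m'})$ factor rather than a $\log n$ or a $D_{m'}$-dependent factor; everything else (martingale structure, truncation under Assumption~\ref{hyp_tails}, and the final union bound) is routine.
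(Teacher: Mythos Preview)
Your plan is essentially the paper's: apply Theorem~5 of \cite{aubert:hal-04526484} to the family $(n\nu(g_\delta^{m'}))_\delta$ for each $m'$, then union-bound over $\Mcal$. However, you gloss over the one step that is not routine and is the main technical device of the proof. The cited theorem controls a ratio of the form $(Y_\delta - Y_\theta)/(R_{2,n}(\delta,\theta)^2 + \sigma^2)$ for a \emph{reference point} $\theta$ belonging to the index set, where $R_{2,n}(\delta,\theta)^2$ is (a multiple of) the bracket of the difference $Y_\delta - Y_\theta$. To make the numerator equal to $n\nu(g_\delta^{m'})$ and the denominator equal to $2n\Vbf_n(p_\delta^{m'})$, the paper adjoins the extra point $\star$, corresponding to $p^\star$ itself, to the index set and uses it as pivot: then $Y_\star = n\nu(g_\star^{m'}) = 0$ and $R_{2,n}(\delta,\star)^2 \leq 2n\Vbf_n(p_\delta^{m'})$. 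This forces an embedding of $\Theta_{m'} \cup \{\star\}$ into $\Rbb^{D_{m'}+1}$ with a norm $\widetilde{N}$ that simultaneously extends the Lipschitz control on $\Theta_{m'}$ and the envelope bound $F^\infty_{m'}$ at $\star$; this is precisely why $\sigma_{m'}$ in~\eqref{defsigmam} carries $D_{m'}+1$ rather than the $D_{m'}$ you write in your entropy sketch.

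If instead you index only over $\Theta_{m'}$ and pick a pivot $\theta_0 \in \Theta_{m'}$, the theorem hands you $\nu(g_\delta^{m'}) - \nu(g_{\theta_0}^{m'})$ over $R_{2,n}(\delta,\theta_0)^2$, which is neither the numerator nor the denominator of the lemma; undoing this would require a separate control of $\nu(g_{\theta_0}^{m'})$ and an absorption of the stray $\Vbf_n(p_{\theta_0}^{m'})$ term, which your plan does not mention. The remaining ingredients you list---the Bernstein moment condition $|C_{n+1}^\ell| \leq \frac{\ell!}{2} R_{2,n}^2 R_{\infty,n}^{\ell-2}$ coming from the Lipschitz/envelope bounds, the truncation event $A_n^n$ and its cost $q_n$ under Assumption~\ref{hyp_tails}, measurability of the suprema via Assumption~\ref{densites_continues}, the peeling producing $\log(v_{m'})$, and the final union bound (doubled to get $|\nu|$)---are exactly as in the paper.
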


\begin{proof}
 See Section \ref{proofofdiffnufm}.
\end{proof}

\begin{lemma}
\label{lemmaVariance}
Under either Assumptions~\ref{hyp_tails_bounded} and~\ref{hyp_lipschitz_bounded} or Assumptions~\ref{hyp_tails} and~\ref{hyp_lipschitz},
almost surely, for all $m' \in \Mcal$ and $\delta \in \Theta_{m'}$,
\begin{equation*}
\Vbf_n(p_\delta^{m'})
 \leq 16 (F^\infty_{m'})^2 \Kbf_n(p_\delta^{m'}).
\end{equation*}
\end{lemma}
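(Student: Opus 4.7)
The core of the argument is the elementary identity $\Ebb\!\left[ p^{m'}_{\delta,t}(X_t)/p^\star_t(X_t) \,\big|\, \Fcal_{t-1} \right] = 1$, which holds almost surely because $p^{m'}_{\delta,t}(\cdot\,|\,\Fcal_{t-1})$ is a conditional density with respect to $\mu$. Setting $Z_t = \log\!\big(p^\star_t(X_t|\Fcal_{t-1})/p^{m'}_{\delta,t}(X_t|\Fcal_{t-1})\big)$ and $\phi(y) = e^y - 1 - y \geq 0$, this identity rewrites as
\begin{equation*}
\Ebb[\phi(-Z_t) \mid \Fcal_{t-1}] = \Ebb[Z_t \mid \Fcal_{t-1}],
\end{equation*}
so that the conditional Kullback-Leibler summand appearing in $\Kbf_n(p^{m'}_\delta)$ is exactly $\Ebb[\phi(-Z_t)\mid\Fcal_{t-1}]$ (with the nonnegativity of $\phi$ giving back the nonnegativity of conditional KL as a bonus).

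With this reformulation, the lemma reduces to a purely deterministic pointwise inequality: for every $z \in \Rbb$ and every $c \geq 1$,
\begin{equation*}
z^2\, \one_{|z| \leq c} \leq 16\, c^2\, \phi(-z).
\end{equation*}
I would verify it by a short case analysis on the sign of $z$. For $z \leq 0$, a Taylor expansion of the exponential gives $\phi(-z) = e^{|z|} - 1 - |z| \geq z^2/2$. For $0 \leq z \leq 2$, a similar Taylor expansion yields $\phi(-z) = e^{-z} + z - 1 \geq z^2/3$. For $z \geq 2$, the crude estimate $\phi(-z) \geq z - 1 \geq z/2$ combined with $z^2 \leq cz$ gives $z^2 \leq 2c\,\phi(-z)$. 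In all three regimes the ratio $z^2/\phi(-z)$ is at most of order $c$, hence \emph{a fortiori} at most $16 c^2$ as soon as $c \geq 1$. This lower bound on $c$ holds in both assumption sets: under Assumption~\ref{hyp_tails_bounded}, $F^\infty_{m'} = 2\log(\varepsilon^{-1}) \geq 2$, and under Assumption~\ref{hyp_tails}, $F^\infty_{m'} = B_{m'}\log n \geq \log 2$ with $B_{m'} \geq 1$ and $n \geq 2$.

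To finish, note that under Assumption~\ref{hyp_tails_bounded} the truncation indicator $\one_{|Z_t| \leq F^\infty_{m'}}$ is almost surely equal to $1$, so $\Vbf_n(p^{m'}_\delta)$ takes exactly the same truncated form as under Assumption~\ref{hyp_tails}, and the two cases are handled simultaneously. Applying the pointwise bound above at $z = Z_t$ with $c = F^\infty_{m'}$, taking conditional expectation (the right-hand side being nonnegative lets us drop any leftover indicator), and averaging over $t \in [n]$ gives
\begin{equation*}
\Vbf_n(p^{m'}_\delta) \leq 16 (F^\infty_{m'})^2 \cdot \frac{1}{n} \sum_{t=1}^n \Ebb[Z_t \mid \Fcal_{t-1}] = 16 (F^\infty_{m'})^2\, \Kbf_n(p^{m'}_\delta),
\end{equation*}
which is the claim. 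There is no real obstacle: the lemma is essentially a one-line consequence of the density identity $\Ebb[e^{-Z_t}|\Fcal_{t-1}]=1$ and the elementary comparison between $z^2$ and $\phi(-z)$ on a bounded interval.
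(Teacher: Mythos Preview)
Your argument is correct and takes a genuinely different route from the paper. The paper passes through the Hellinger distance: it invokes a lemma adapted from Shen, Tokdar and Ghosal to bound the truncated second moment of $\log(p^\star/p^{m'}_\delta)$ by $16(1+(F^\infty_{m'})^2)\,h^2(p^\star_t,p^{m'}_{\delta,t}\mid\Fcal_{t-1})$, and then applies the classical inequality $2h^2 \leq \DKL$. Your argument is more direct and more elementary: the density identity $\Ebb[e^{-Z_t}\mid\Fcal_{t-1}] = 1$ rewrites the conditional Kullback--Leibler term as $\Ebb[\phi(-Z_t)\mid\Fcal_{t-1}]$, after which the lemma reduces to the pointwise comparison $z^2\one_{|z|\leq c} \leq 16c^2\phi(-z)$. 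This bypasses both the Hellinger intermediary and the external lemma entirely; the paper's route, on the other hand, makes the link to Hellinger explicit, which can be useful elsewhere.

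Two small numerical slips, neither fatal. First, your claim $\phi(-z) \geq z^2/3$ on $[0,2]$ is false for $z$ near $2$ (e.g.\ $\phi(-2) = e^{-2}+1 \approx 1.135 < 4/3$); the sharp bound on that interval is $z^2 \leq 4\phi(-z)$, which still sits comfortably under $16c^2$. Second, you state the pointwise inequality ``for every $c \geq 1$'' but then note that under Assumption~\ref{hyp_tails} one only has $F^\infty_{m'} \geq \log 2 < 1$ when $n=2$. In fact your inequality holds for all $c \geq \log 2$ (since $16(\log 2)^2 > 4$), so the conclusion survives; the paper's own proof has the same implicit $F^\infty_{m'} \geq 1$ when it writes $8(1+(F^\infty_{m'})^2) \leq 16(F^\infty_{m'})^2$.
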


\begin{proof}
 See Section~\ref{section_proofoflemmaVariance}.
\end{proof}

Fix some sequence $(\eta_{m'})_{m' \in \Mcal}$ in $(0,1)$ to be determined later and let $y_{m'} = \eta_{m'}^{-1} \sqrt{\sigma_{m'}^2 + x+D_{m'}}$ for all $m' \in \Mcal$.
By Lemma~\ref{controlofdiffnufm}, and using the definition of $\Sigma$ and the fact that since $A_{m'} \geq 2$, $n \geq 2$ and $x \geq 0$,
\begin{equation*}
\log(v_{m'})
 = \log A_{m'} + \frac{1}{2} \log (2n)
 \leq 3 \log(n) \log A_{m'},
\end{equation*}
it holds with probability at least $1 - q_n - 18 \log(n) \Sigma e^{-x}$, for all $m' \in \Mcal'$,
\begin{equation*}
- \nu(g_{\hat{\theta}^{m'}}^{m'})
 \leq 80 (2 \eta_{m'} + \eta_{m'}^2 A_{m'}) \left( 32 (F^\infty_{m'})^2 \Kbf_n(p_{\hat{\theta}^{m'}}^{m'}) + \frac{1}{n} y_{m'}^2 \right)
\end{equation*}
and
\begin{equation*}
\nu(g_{\Bar{\theta}^m}^m)
 \leq 80 (2 \eta_m + \eta_m^2 A_m) \left( 32 (F^\infty_m)^2 \Kbf_n(p_{\Bar{\theta}^m}^m) + \frac{1}{n} y_m^2 \right).
\end{equation*}
Injecting this result in~\eqref{massartdebut}, with probability at least $1 - q_n - 18 \log(n) \Sigma e^{-x}$, for all $m' \in \Mcal'$,
\begin{multline}
\label{modelselectionproba}
\left( 1 - 2560 (F^\infty_{m'})^2 (2 \eta_{m'} + \eta_{m'}^2 A_{m'}) \right) \Kbf_n(p_{\hat{\theta}^{m'}}^{m'}) \\
 \leq \left( 1 + 2560 (F^\infty_m)^2 (2 \eta_m + \eta_m^2 A_m) \right) \Kbf_n(p_{\Bar{\theta}^m}^{m}) \\
 + 80 \left(\frac{2}{\eta_m} + A_m\right) \frac{1}{n} (\sigma_m^2 + x + D_m) + \pen(m) \\
 + 80 \left(\frac{2}{\eta_{m'}} + A_{m'}\right) \frac{1}{n} (\sigma_{m'}^2 + x + D_{m'}) - \pen(m').
\end{multline}
By the definition of $\sigma_{m'}$ in~\eqref{defsigmam}, it is smaller than the solution $\sigma_{m'}'$ of
\begin{equation}
\sigma
 = \displaystyle \sqrt{(D_{m'} + 1) \log \left(\frac{v_{m'}}{\sigma} \vee e \right)}
 + \frac{A_{m'}}{\sigma} (D_{m'} + 1) \log \left(\frac{v_{m'}}{\sigma} \vee e \right),
\end{equation}
which satisfies
\begin{align*}
\sqrt{A_{m'} (D_{m'} + 1)}
 \leq \sigma_{m'}'
 &\leq \sqrt{(D_{m'} + 1) \log\left(\frac{\sqrt{2n A_{m'}}}{\sqrt{D_{m'} + 1}} \vee e \right)} \\
 & \qquad + \sqrt{A_{m'} (D_{m'} + 1)} \log\left(\frac{\sqrt{2n A_{m'}}}{\sqrt{D_{m'} + 1}} \vee e \right) \\
 &\leq 2 \sqrt{A_{m'} (D_{m'} + 1)} \log((n A_{m'}) \vee e).
\end{align*}

Note that $n A_{m'} \geq e$.
Let $\kappa \in (0,1]$ and for all $m' \in \Mcal$, let $\eta_{m'} = \frac{c \kappa}{A_{m'} (F^\infty_{m'})^{3/2}}$ for some small enough numerical constant $c > 0$, and recall that $A_{m'} \geq F^\infty_{m'}$, then there exist numerical constants $C_{\pen}$ and $C$ such that if for all $m' \in \Mcal'$,
\begin{equation}
\label{choixpenalite}
 \pen(m') \geq \frac{C_{\pen}}{\kappa} A_{m'}^2 (F^\infty_{m'})^{3/2} \log(nA_{m'})^2 \frac{D_{m'}}{n},
\end{equation}
it holds for all $x \geq 0$, with probability at least $1 - q_n - 18 \log (n) \Sigma e^{-x}$, for all $m' \in \Mcal'$,
\begin{multline}\label{eq_concentration_proba}
( 1 - \kappa ) \Kbf_n(p_{\hat{\theta}^{m'}}^{m'})
 \leq ( 1 + \kappa ) \Kbf_n(p_{\Bar{\theta}^m}^{m})
 + 2\pen(m) \\
 + \frac{C}{\kappa} (A_{m'} (F^\infty_{m'})^{3/2} \log(nA_{m'})^2 + A_m (F^\infty_m)^{3/2} \log(nA_m)^2) \frac{x}{n}.
\end{multline}

\section{Proof of Corollaries~\ref{cor_generalmodelselection_bounded} and~\ref{cor_generalmodelselection}}
\label{sec_proof_corollaries}

Corollary~\ref{cor_generalmodelselection_bounded} follows directly from the fact that $\Ebb[Z] \leq \int_{t \geq 0} \Pbb(Z \geq t) dt$ for any random variable $Z$.

Under Assumptions~\ref{hyp_tails}, let us first show that $\Kbf_n(p_{\delta}^{m'})$ is bounded for all $\delta$ and $m'$ almost surely.
By Assumption~\ref{hyp_tails}, almost surely, for any $m' \in \Mcal$ and $\delta \in \Theta_{m'}$,
\begin{align*}
\Kbf_n(p_{\delta}^{m'})
 &\leq \frac{1}{n} \sum_{t=1}^n \Ebb\left[ \left|\log 
 \frac{p_t^\star(X_t | \Fcal_{t-1})}{p_{\delta,t}^{m'}(X_t | \Fcal_{t-1})} \right| \, \Big| \, \Fcal_{t-1} \right]\\
 &\leq \frac{1}{n} \sum_{t=1}^n \left( B_{m'} + B_{m'} \int_{1}^{+\infty} \Pbb\left(\left|\log 
 \frac{p_t^\star(X_t | \Fcal_{t-1})}{p_{\delta,t}^{m'}(X_t | \Fcal_{t-1})} \right| \geq B_{m'} y \, \Big| \, \Fcal_{t-1} \right)dy \right) \\
 &\leq 2 B_{m'}.
\end{align*}

To conclude, assume that there exist $A,B > 0$ such that $\sup_{m \in \Mcal} B_m \leq B(n)$ and $\sup_{m \in \Mcal} A_m \leq A(n)$, so that by Theorem~\ref{generalmodelselectiontheorem}, with probability at least $1 - 2n^{-1} - 18 \log(n) \Sigma e^{-x}$,
\begin{multline*}
(1 - \kappa) \Kbf_n(\tilde{p})
 \leq \inf_{m \in \Mcal} \left( (1 + \kappa) \inf_{\theta \in \Theta^{D_m}} \Kbf_n(p_{\theta}^{m})
 + 2 \pen(m)
 \right) \\
 + \frac{2 C'}{\kappa} A(n) B(n)^{3/2} \log(nA(n))^2 \frac{(\log n)^{5/2} x}{n},
\end{multline*}
and use that for any random variable $Z$ such that $Z \leq M$ a.s. for some constant $M > 0$, $\Ebb[Z] \leq \int_{t = 0}^M \Pbb(Z \geq t) dt$, so that for all $\kappa \in (0,1]$,
\begin{multline*}
(1 - \kappa) \Ebb \left[\Kbf_n(p_{\hat{\theta}^{\hat{m}}}^{\hat{m}})\right]
 \leq \Ebb\left[ \inf_{m \in \Mcal} \left( (1 + \kappa) \inf_{\theta \in \Theta^{D_m}} \Kbf_n(p_{\theta}^{m}) + 2 \pen(m)
 \right) \right] \\
 + \frac{4 B(n)}{n}
 + \frac{36 C'}{\kappa} \Sigma A(n) B(n)^{3/2} \log(nA(n))^2 \frac{(\log n)^{7/2}}{n},
\end{multline*}
and the last term dominates the second to last one.

\section{Proof of the Lemmas}

\subsection{Proof of Lemma~\ref{controlofdiffnufm}}
\label{proofofdiffnufm}

Fix $m' \in \Mcal$.
Recall that for any $\delta, \eta \in \Theta_{m'}$,
\begin{equation*}
\nu(g_{\delta}^{m'}) - \nu(g_{\eta}^{m'})
 = \frac{1}{n} \sum_{t=1}^n \int
 \log\left(
 \frac{p_{\eta,t}^{m'}(x_t | \Fcal_{t-1})}{p_{\delta,t}^{m'}(x_t | \Fcal_{t-1})} \right) (d\delta_{X_t}(x_t) - p^\star_t(x_t | \Fcal_{t-1})d\dommes(x_t)) ,
\end{equation*}
where $\delta_a$ is the Dirac measure in $a$.

We extend $\Theta_{m'}$ into $\Theta_{m'} \cup \{\star\}$ by defining $p^{m'}_\star = p^\star$, so that when $\eta = \star$, $\nu(g_{\eta}^{m'}) = 0$ and the formula above becomes $\nu(g_{\delta}^{m'})$.
We want to control this uniformly over $\delta, \eta \in \Theta_{m'} \cup \{\star\}$.

Fix $\delta, \eta \in \Theta_{m'} \cup \{\star\}$.
For any $t \in [n]$, let
\begin{equation*}
 \Delta_{t} = \int
 \log\left(
 \frac{p_{\eta,t}^{m'}(x_t | \Fcal_{t-1})}{p_{\delta,t}^{m'}(x_t | \Fcal_{t-1})} \right) (d\delta_{X_t}(x_t) - p^\star_t(x_t |\Fcal_{t-1})d\dommes(x_t)).
\end{equation*}
For any $t \in [n+1]$, let $M_t = \sum_{s=1}^{t-1} \Delta_s$ (in particular, $M_1 = 0$), so that $\frac{1}{n} M_{n+1} = \nu(g_{\delta}^{m'}) - \nu(g_{\eta}^{m'})$.

$(M_{t})_{t \geq 1}$ is a $(\sigma(X_1^{t-1}))_{t \geq 1}$-martingale. For $\ell \geq 2$, let $C_1^{\ell}= 0$ and for $t \geq 2$, let
\begin{equation*}
 C_t^{\ell} = \sum_{s=1}^{t-1} \Ebb [ \Delta_s^\ell \, | \, 
 \Fcal_{t-1}].
\end{equation*}

Note that for all $s \in [n]$,
\begin{equation*}
 | \Delta_s | \leq 2 \int
 \left| \log 
 \frac{p_{\eta,s}^{m'}(x_s | \Fcal_{s-1})}{p_{\delta,s}^{m'}(x_s | \Fcal_{s-1})} \right| \frac{d\delta_{X_s}(x_s) + p^\star_s(x_s | \Fcal_{s-1})d\dommes(x_s)}{2},
\end{equation*}
so that by convexity of $x \mapsto x^\ell$,
\begin{align}
\nonumber
|C_t^{\ell}|
 &\leq \sum_{s=1}^{t-1} \Ebb \left[
 2^\ell \int \left| \log 
 \frac{p_{\eta,s}^{m'}(x_s | \Fcal_{s-1})}{p_{\delta,s}^{m'}(x_s | \Fcal_{s-1})} \right|^\ell \frac{d\delta_{X_s}(x_s) + p^\star_s(x_s |\Fcal_{s-1})d\dommes(x_s)}{2}
 \, \bigg| \, \Fcal_{s-1} \right] \\
\label{partialAt}
 & = 2^\ell \sum_{s=1}^{t-1} \int \left| \log 
 \frac{p_{\eta,s}^{m'}(x_s | \Fcal_{s-1})}{p_{\delta,s}^{m'}(x_s | \Fcal_{s-1})} \right|^\ell p^\star_s(x_s | \Fcal_{s-1})d\dommes(x_s).
\end{align}

Let $\Qcal$ be a countable dense subset of $\Xcal$.

Now, let us distinguish two cases:

\begin{itemize}
\item Under Assumptions~\ref{hyp_tails_bounded} and~\ref{hyp_lipschitz_bounded}, for each $t \in [n]$, let $A_n^t$ be the event defined by 
\begin{multline*}
 \Bigg\{
 \forall s \in [t], \ \forall m'' \in \Mcal, \ \forall \delta, \theta \in \Theta_{m''} \cup \{\star\}, \ \left| \log \frac{p^{m''}_\delta(X_s | \Fcal_{s-1})}{p^{m''}_\theta(X_s | \Fcal_{s-1})} \right| \leq 2 \log \frac{1}{\varepsilon} \\
 \text{and} \quad
 \forall s \in [t], \ \forall {m''} \in \Mcal, \ \forall \delta, \theta \in \Theta_{m''}, \ \left| \log \frac{p^{m''}_\delta(X_s | \Fcal_{s-1})}{p^{m''}_\theta(X_s | \Fcal_{s-1})} \right| \leq L_{m''} \|\delta - \theta\|_{m''}
\Bigg\}.
\end{multline*}
 Thanks to Assumptions~\ref{hyp_tails_bounded} and~\ref{hyp_lipschitz_bounded}, $\Pbb(A_n^t)=1$. Write $F^\infty_{m'} = 2 \log(\varepsilon^{-1})$ and $F^{\text{lip}}_{m'} = L_{m''}$.

\item Under Assumptions~\ref{hyp_tails} and~\ref{hyp_lipschitz}, for each $t \in [n]$, let $A_n^t$ be the event defined by 
\begin{multline*}
\Bigg\{
 \forall s \in [t], \ \forall m'' \in \Mcal, \ \forall \delta, \theta \in \Theta_{m''} \cup \{\star\}, \ \left| \log \frac{p^{m''}_\delta(X_s | \Fcal_{s-1})}{p^{m''}_\theta(X_s | \Fcal_{s-1})} \right| \leq 2 B_{m''} \log n \\
 \text{and} \quad
 \forall s \in [t], \ \forall {m''} \in \Mcal, \ \forall \delta, \theta \in \Theta_{m''}, \ \left| \log \frac{p^{m''}_\delta(X_s | \Fcal_{s-1})}{p^{m''}_\theta(X_s | \Fcal_{s-1})} \right| \leq L_{m''} \|\delta - \theta\|_{m''} \log n
\Bigg\}
\end{multline*}
Thanks to Assumptions~\ref{hyp_tails} and~\ref{hyp_lipschitz}, $\Pbb(A_n^t) \geq 1 - 2n^{-1}$. Write $F^\infty_{m'} = 2 B_{m''} \log n$ and $F^{\text{lip}}_{m'} = L_{m''} \log n$.
\end{itemize}

%
%

Note that in both cases, $A_n^t$ depends on $X_t$ and $\Fcal_{t-1}$. So given $\Fcal_{t-1}$, we can ask ourselves what are the possible values $x$ of $X_t$, so that $A_n^t $ is true. To simplify notation, we write $x \in A_n^t$ when this is true. The event $\{x\in A_n^t\}$ is therefore $\Fcal_{t-1}$ measurable.

Now, under Assumption~\ref{densites_continues}, we may define
\begin{align*}
R_{\infty,n}(\delta, \eta)
 &= \max_{1 \leq s \leq n}
 \sup_{x \in \Qcal} \left( \left| \log 
 \frac{p_{\eta,s}^{m'}(x | \Fcal_{s-1})}{p_{\delta,s}^{m'}(x |\Fcal_{s-1})} \right| \one_{x \in A_n^s} \right)
\end{align*}
and
\begin{align*}
R_{2,n}(\delta, \eta)^2
 &= 2 \sum_{s=1}^{n} \Ebb\left[ \left( \log 
 \frac{p_{\eta,s}^{m'}(X_s | \Fcal_{s-1})}{p_{\delta,s}^{m'}(X_s | \Fcal_{s-1})} \right)^2 \one_{A_n^s} \, \Bigg| \, \Fcal_{s-1} \right].
\end{align*}

Note that because $\Qcal$ is countable, both $R_{\infty,n}(\delta, \eta)$ and $R_{2,n}(\delta, \eta)$ are well defined and random.
So that on the event $A_n^n$,
\begin{align*}
|C_t^{\ell}|
 &\leq 2^{\ell-1} R_{\infty,n}(\delta, \eta)^{\ell-2} R_{2,n}(\delta, \eta)^2,
\end{align*}
and since $2^{\ell-1} \leq \ell!$ for all $\ell \geq 2$, on the event $A_n^n$,
\begin{equation}
\label{eq_majoration_C}
|C_{n+1}^{\ell}|
 \leq \frac{\ell!}{2} R_{2,n}(\delta, \eta)^2 R_{\infty,n}(\delta, \eta)^{\ell-2}.
\end{equation}
By definition, on the event $A_n^n$,
\begin{equation}
\label{eq_prop_R2}
\left\{
\begin{aligned}
\forall \delta, \eta \in \Theta_{m'} \cup \{\star\}, \quad
 & R_{2,n}(\delta, \eta) \leq F^\infty_{m'} \sqrt{2n}, \\
\forall \delta, \eta \in \Theta_{m'}, \quad
 & R_{2,n}(\delta, \eta) \leq F^{\text{lip}}_{m'} \|\delta - \eta\|_{m'} \sqrt{2n}.
\end{aligned}
\right.
\end{equation}
Finally, since $R_{2,n}(\delta,\eta)$ is the Euclidean norm of the vector whose coordinate $s \in [n]$ is the $\Lbf^2(\Xcal, p^\star(\cdot | 
\Fcal_{s-1}) d\dommes)$ distance between $\log p^{m'}_{\eta,s}(\cdot | 
\Fcal_{s-1}) \one_{A_n^s}$ and $\log p^{m'}_{\delta,s}(\cdot | 
\Fcal_{s-1}) \one_{A_n^s}$, it satisfies the triangular inequality: for all $\eta,\delta,\theta \in \Theta_{m'} \cup \{\star\}$,
\begin{equation}
\label{eq_IT_R2}
R_{2,n}(\delta, \eta)
 \leq R_{2,n}(\delta, \theta) + R_{2,n}(\theta, \eta).
\end{equation}

Likewise, on the event $A_n^n$,
\begin{equation}
\label{eq_prop_Rinfty}
\left\{
\begin{aligned}
\forall \delta, \eta \in \Theta_{m'} \cup \{\star\}, \quad
 & R_{\infty,n}(\delta, \eta) \leq F^\infty_{m'}, \\
\forall \delta, \eta \in \Theta_{m'}, \quad
 & R_{\infty,n}(\delta, \eta) \leq F^{\text{lip}}_{m'} \|\delta - \eta\|_{m'}.
\end{aligned}
\right.
\end{equation}
and for all $\eta,\delta,\theta \in \Theta_{m'} \cup \{\star\}$
\begin{equation}
\label{eq_IT_Rinfty}
R_{\infty,n}(\delta, \eta)
 \leq R_{\infty,n}(\delta, \theta) + R_{\infty,n}(\theta, \eta).
\end{equation}

Identify $\Theta_{m'} \cup \{\star\}$ with the subset $\widetilde{\Theta}_{m'}$ of the vector space $\Rbb^{D_{m'}+1}$ of generic element $(\delta,u)$ with $\delta \in \Rbb^{D_{m'}}$ and $u \in \Rbb$, defined as
\begin{equation*}
\widetilde{\Theta}_{m'}
 = \{ (\theta,0) : \theta \in \Theta_{m'} \}
 \cup \{ (\bar{\theta}^{m'},1) \},
\end{equation*}
for some fixed $\bar{\theta}^{m'} \in \Theta_{m'}$. Endow the vector space $\Rbb^{D_{m'}+1}$ with the norms
\begin{equation*}
\widetilde{N}_\infty((\delta,u))
 = F^{\text{lip}}_{m'} \|\delta\|_{m'}
 + F^\infty_{m'} |u|
\quad \text{and} \quad
\widetilde{N}_2((\delta,u))
 = \sqrt{2n} \widetilde{N}_\infty((\delta,u)).
\end{equation*}

Under Assumption~\ref{hyp_lipschitz}, for any $\delta, \eta \in \Theta_{m'}$, by~\eqref{eq_prop_R2} and~\eqref{eq_prop_Rinfty}, on the event $A_n^n$,
\begin{equation*}
\left\{
\begin{aligned}
R_{2,n}(\delta,\eta)
 \leq F^{\text{lip}}_{m'} \|\delta - \eta\|_{m'} \sqrt{2n}
 = \widetilde{N}_2((\delta,0) - (\eta,0)) \\
R_{\infty,n}(\delta,\eta)
 \leq F^{\text{lip}}_{m'} \|\delta - \eta\|_{m'}
 = \widetilde{N}_\infty((\delta,0) - (\eta,0)),
\end{aligned}
\right.
\end{equation*}
and both inequalities extend to $\eta = \star$ since, for $R_{2,n}$,
\begin{align*}
R_{2,n}(\delta,\star)
 &\leq F^\infty_{m'} \sqrt{2n} \\
 &\leq F^{\text{lip}}_{m'} \|\delta - \bar{\theta}^{m'}\|_{m'} \sqrt{2n} + F^\infty_{m'} \sqrt{2n} \\
 &= \widetilde{N}_2((\delta,0) - (\bar{\theta}^{m'},1))
\end{align*}
and likewise for $\delta = \star$ and for $R_{\infty,n}$.
Let 
\begin{equation*}
A_{m'} = F^{\text{lip}}_{m'} M_{m'} + F^\infty_{m'}
 \quad \text{and} \quad
v_{m'} = A_{m'} \sqrt{2n},
\end{equation*}
so that $\widetilde{N}_2(\delta-\eta) \leq v_{m'}$ and $\widetilde{N}_\infty(\delta-\eta) \leq A_{m'}$ for all $\delta,\eta \in \widetilde{\Theta}_{m'}$.

We may now apply Theorem 5 of \cite{aubert:hal-04526484} to the process $Y_\delta = n \nu(g_{\delta}^{m'})$ indexed by $\delta \in \Theta_{m'} \cup \{\star\}$, with $c=0$ and the event $A = A_n^n$: for all $\sigma> 0$ and $x \geq 0$, let
\begin{equation*}
 \Psi(\sigma,x) =
 20 \left[ (\sigma \wedge v_{m'}) \sqrt{x + (D_{m'}+1) \log \left(\frac{v_{m'}}{\sigma} \vee e \right)}
 + A_{m'} \left(x + (D_{m'}+1) \log \left(\frac{v_{m'}}{\sigma} \vee e \right) \right) \right].
\end{equation*}
Then, for all $\theta \in \Theta_{m'} \cup \{\star\}$, $\sigma > 0$ and $x \geq 0$,
\begin{multline*}
\Pbb \left( \left\{ \sup_{\delta \in \Theta_{m'} \cup \{\star\}} \frac{Y_{\delta}-Y_{\theta}}{R_{2,n}(\delta,\theta)^2 + \sigma^2} \geq 4\sigma^{-2} \Psi(\sigma,x + D_{m'}) \right\} \cap A_n^n \right) \\
 \leq \left(2 \log \left(\frac{v_{m'}}{\sigma}\right) \vee 0 + 1\right) e^{-(x+D_{m'})}.
\end{multline*}

In particular, by taking the union bound over $m' \in \Mcal$ for $\theta = \star$, for any family of positive numbers $(y_{m'})_{m' \in \Mcal}$, with probability at least $\Pbb(A_n^n) - \displaystyle e^{-x} \sum_{m' \in \Mcal} \left(2\log \left(\frac{v_{m'}}{y_{m'}}\right) \vee 0 + 1\right) e^{-D_{m'}}$, for all $m' \in \Mcal$,
\begin{multline*}
\sup_{\delta\in \Theta_{m'}} \left(\frac{n \nu(g_{\delta}^{m'})}{R_{2,n}(\delta,\star)^2 + y_{m'}^2}\right)
 \\
 \leq \frac{80}{y_{m'}^2} \Bigg(
 y_{m'} \sqrt{x} + (y_{m'} \wedge v_{m'}) \sqrt{(D_{m'}+1) \log \left(\frac{v_{m'}}{y_{m'}} \vee e \right)}
 \\
 + A_{m'} x + A_{m'} (D_{m'}+1) \log \left(\frac{v_{m'}}{y_{m'}} \vee e \right)
 \Bigg).
\end{multline*}

For each $m' \in \Mcal$, let $\sigma_{m'}$ be the solution of the equation
\begin{equation*}
\sigma
 = \left(1 \wedge \frac{v_{m'}}{\sigma} \right) \sqrt{(D_{m'}+1) \log \left(\frac{v_{m'}}{\sigma} \vee e \right)}
 + \frac{A_{m'}}{\sigma} (D_{m'}+1) \log \left(\frac{v_{m'}}{\sigma} \vee e \right),
\end{equation*}
which exists since the right hand side is positive and non-increasing on $(0,+\infty)$. For any family $(y_{m'})_{m' \in \Mcal}$ such that $y_{m'} \geq \sigma_{m'}$ for all $m' \in \Mcal$, for any $x \geq 0$, it holds with probability at least $\Pbb(A_n^n) - \displaystyle e^{-x} \sum_{m' \in \Mcal} \left(2\log \left(\frac{v_{m'}}{y_{m'}}\right)\vee 0 + 1\right) e^{-D_{m'}}$ that for all $m' \in \Mcal$,
\begin{align*}
\sup_{\delta\in \Theta_{m'}} & \left(\frac{n \nu(g_{\delta}^{m'})}{R_{2,n}(\delta,\star)^2 + y_{m'}^2}\right)
 \leq \frac{80}{ y_{m'}} \left(\sigma_{m'} + \sqrt{x+D_{m'}} + \frac{A_{m'}}{y_{m'}}(x+D_{m'}) \right).
\end{align*}
Let $\eta \in (0,1)$ and fix for each $m' \in \Mcal$
\begin{equation*}
 y_{m'} = \eta^{-1} \sqrt{\sigma_{m'}^2 + x+D_{m'}} \ ,
\end{equation*}
for all $x \geq 0$, with probability at least $\Pbb(A_n^n) - \displaystyle e^{-x} \sum_{m' \in \Mcal} \left(2\log \left(\frac{v_{m'}}{y_{m'}}\right)\vee 0 + 1\right) e^{-D_{m'}}$, for all $m' \in \Mcal$,
\begin{equation*}
\sup_{\delta \in \Theta_{m'}} \left(\frac{\nu(g_{\delta}^{m'})}{\frac{1}{n} R_{2,n}(\delta,\star)^2 + \frac{1}{n} y_{m'}^2}\right)
 \leq 80 (2 \eta + \eta^2 A_{m'}),
\end{equation*}
where 
\begin{itemize}
 \item the first term on the right hand side is due to the concavity of $x \in (0,+\infty) \mapsto \sqrt{x}$,
 \item the second term on the right hand side holds because $x+D_{m'} \leq x + D_{m'}+\sigma_{m'}^2 = \eta^2 y_{m'}^2$.
\end{itemize}
By definition $y_{m'} \geq \eta^{-1} \sqrt{D_{m'}} \geq 1$, and $v_{m'} = A_{m'} \sqrt{2n} \geq e$. Therefore,
\begin{equation*}
2 \log \left(\frac{v_{m'}}{y_{m'}}\right) \vee 0 + 1
 \leq 3 \log v_{m'}.
\end{equation*}
The control of $-\nu(g_\delta^{m'})$ is identical, hence we may control all $|\nu(g_\delta^{m'})|$ with probability at least $\Pbb(A_n^n) - \displaystyle 6 e^{-x} \sum_{m' \in \Mcal} \log(v_{m'}) e^{-D_{m'}}$ by union bound. To conclude the proof of the Lemma, note that $\frac{1}{n} R_{2,n}(\delta,\star)^2 \leq 2\Vbf_n(p_\delta)$.

\subsection{Proof of Lemma~\ref{lemmaVariance}}
\label{section_proofoflemmaVariance}

Let's begin with a result proved in \cite{shen2013adaptive} which we slightly adapt to our situation. 

\begin{lemma}[Adaptation of Lemma 4 in \cite{shen2013adaptive}]\label{lemma_4STG}
For any probability measures $P$ and $Q$ with densities $p$ and $q$, and any $\lambda \in (0,1/2]$,
\begin{equation*}
P \left\{ \left(\log\frac{p}{q}\right)^2\one_{\left| \log\left(\frac{p}{q}\right)\right|\leq \log\left(\frac{1}{\lambda}\right)} \right\}
 \leq 8 \left(1 + \left(\log\frac{1}{\lambda}\right)^2 \right) P \left\{ \left(\frac{q^{1/2}}{p^{1/2}}-1\right)^2\one_{\left| \log\left(\frac{p}{q}\right)\right|\leq \log\left(\frac{1}{\lambda}\right)} \right\}.
\end{equation*}
\end{lemma}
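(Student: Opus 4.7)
The statement is a pointwise inequality in disguise, followed by integration against $P$. My plan is to introduce the auxiliary variable $t = \log(p/q)$ (defined on $\{pq>0\}$), note that $q^{1/2}/p^{1/2} - 1 = e^{-t/2} - 1$, and reduce the claim to the scalar inequality
\begin{equation*}
\forall t \in [-T, T], \qquad t^2 \;\leq\; 8(1+T^2)\,(e^{-t/2}-1)^2,
\end{equation*}
where $T = \log(1/\lambda) \geq \log 2 > 0$. Once this is established pointwise on the event $\{|\log(p/q)| \leq T\}$, integrating against $p\, d\mu$ yields the lemma immediately. The (trivial) set where $pq = 0$ contributes nothing since the indicator vanishes there.

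The scalar inequality I would split into two cases depending on the sign of $t$. For $t \geq 0$, the function $x \mapsto e^x - 1 - x$ is nonnegative, so $e^{t/2} \geq 1 + t/2$, which after rearrangement gives the useful lower bound
\begin{equation*}
1 - e^{-t/2} \;\geq\; \frac{t/2}{1 + t/2} \;=\; \frac{t}{2+t}.
\end{equation*}
Squaring and inverting yields $t^2/(1-e^{-t/2})^2 \leq (2+t)^2 \leq (2+T)^2 \leq 2(4 + T^2) \leq 8(1+T^2)$, which is what we want (with room to spare). For $t \leq 0$, set $s = -t \in [0,T]$; the same convexity bound $e^{s/2} \geq 1 + s/2$ gives $e^{s/2} - 1 \geq s/2$, hence $t^2/(e^{-t/2}-1)^2 = s^2/(e^{s/2}-1)^2 \leq 4 \leq 8(1+T^2)$ trivially. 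Combining the two cases gives the pointwise inequality on $[-T,T]$.

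The only subtle step is choosing a lower bound on $1-e^{-t/2}$ for $t \in [0,T]$ that is sharp enough to produce a bound growing no faster than $T^2$ (not like $1/\lambda = e^T$). The naive bound $1-e^{-t/2} \geq \frac{t}{2}e^{-t/2}$ would cost a factor $e^T = 1/\lambda$, which is too weak; the refined inequality $1 - e^{-t/2} \geq t/(2+t)$, derived from $e^{t/2} \geq 1+t/2$, is the right choice. Beyond this, the proof is routine: conclude by integrating the pointwise inequality against $p\,d\mu$ on $\{|\log(p/q)| \leq T\}$.
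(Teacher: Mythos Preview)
Your proof is correct. Both you and the paper reduce to a pointwise inequality and split according to the sign of $\log(p/q)$; in the easy case $t\le 0$ (equivalently $q/p\ge 1$) the two arguments are identical, since the paper's bound $|\log x|\le 2|x^{1/2}-1|$ for $x\ge 1$ is exactly your $e^{s/2}\ge 1+s/2$.

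The genuine difference is in the case $t\ge 0$ (i.e.\ $q/p\le 1$). The paper follows Shen et al.\ and introduces the implicit function $r$ defined by $\log x=2(x^{1/2}-1)-r(x)(x^{1/2}-1)^2$, then invokes from \cite{ghosal2007posterior} that $r$ is decreasing with $r(x)\le 2\log(1/x)$ on $(0,1/2]$; combining this with $(a+b)^2\le 2a^2+2b^2$ and $(x^{1/2}-1)^2\le 1$ yields the constant $8(1+(\log(1/\lambda))^2)$. Your route is more elementary and self-contained: from $e^{t/2}\ge 1+t/2$ you extract $1-e^{-t/2}\ge t/(2+t)$, whence $t^2/(1-e^{-t/2})^2\le(2+T)^2\le 8(1+T^2)$ via $(T-2)^2\ge 0$. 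This avoids the auxiliary function $r$ and the external reference entirely, at no cost in the final constant. The paper's approach, on the other hand, makes the structure of the remainder $r$ explicit, which can be reused elsewhere (it is a standard device in the density-estimation literature).
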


\begin{proof}
 See Section~\ref{section_proofoflemma4STG}.
\end{proof}

Let us apply Lemma \ref{lemma_4STG} to $p = p^\star_s$ and $q = p_{\delta,s}^{m'}$ with $\lambda = \exp(-F^\infty_{m'})$ for all $s \in [n]$ and $\delta \in \Theta_{m'}$. By definition of $F^\infty_{m'}$, $\lambda \leq 1/2$, so that for all $s \in [n]$ and $\delta \in \Theta_{m'}$,
\begin{multline*}
\int \left| \log \frac{p_s^\star(x_s | \Fcal_{s-1})}{p_{\delta,s}^{m'}(x_s | \Fcal_{s-1})} \right|^2 \one_{\left|\log \frac{p_s^\star(x_s | \Fcal_{s-1})}{p_{\delta,s}^{m'}(x_s | \Fcal_{s-1})}\right| \leq F^\infty_{m'}} p^\star_s(x_s | \Fcal_{s-1})d\dommes(x_s) \\
 \leq 16 h^2\left(p^\star_s(\cdot | \Fcal_{s-1}), p_{\delta,s}^{m'}(\cdot | \Fcal_{s-1}) \, \Big| \,\Fcal_{s-1} \right) (1 + (F^\infty_{m'})^2),
\end{multline*}
where
\begin{equation*}
h^2\left(p_t(\cdot | \Fcal_{t-1}), q_t(\cdot | \Fcal_{t-1}) \, \Big| \, \Fcal_{t-1} \right)
 = \frac{1}{2} \int \left( \sqrt{p_t(x_t | \Fcal_{t-1})} - \sqrt{q_t(x_t | \Fcal_{t-1})} \right)^2 d\dommes(x_t).
\end{equation*}

Let us recall a classical relation between the Hellinger distance and the Kullback-Leibler divergence, see for instance in \cite[Lemma 7.23]{massart2007concentration}: for any probability measures $P$ and $Q$,
\begin{equation*}
 2 h^2(P,Q) \leq \DKL(P,Q).
\end{equation*}
Applying this inequality to the probability measures $P = p^\star_t(\cdot | X_1^{t-1})$ and $Q = p_{t,\hat{\theta}^{m'}}^{m'}(\cdot | X_1^{t-1})$ conditionally to $X_1^{t-1}$ for all $t \in [n]$ shows that
\begin{equation*}
\Vbf_n(p_{\delta}^{m'})
 \leq 8(1 + (F^\infty_{m'})^2) \Kbf_n(p_{\delta}^{m'})
 \leq 16 (F^\infty_{m'})^2 \Kbf_n(p_{\delta}^{m'})
\end{equation*}
since $F^\infty_{m'} \geq 1$.

\subsection{Proof of Lemma~\ref{lemma_4STG}}\label{section_proofoflemma4STG}

The proof follows exactly the same steps as \cite{shen2013adaptive}. Let $r : (0,+\infty) \to \Rbb$ be the function defined implicitly by 
\begin{equation*}
 \log(x) = 2(x^{1/2}-1)-r(x)(x^{1/2}-1)^2.
\end{equation*}
The function $r$ is non-negative, decreasing, and $r(x) \leq 2 \log(1/x)$ for all $x \in (0,1/2]$ (see e.g.~\cite{ghosal2007posterior}). Let $\lambda \in (0,1/2]$. Since for any $x \geq 1$, $|\log(x)|\leq 2 |x^{1/2}-1|$,
\begin{equation*}
P \left\{ \left(\log\frac{p}{q}\right)^2\one_{1 \leq \frac{q}{p}\leq \frac{1}{\lambda}} \right\}
 \leq 4 P \left\{ \left(\frac{q^{1/2}}{p^{1/2}}-1\right)^2 \one_{1 \leq \frac{q}{p}\leq \frac{1}{\lambda}} \right\}.
\end{equation*}
Moreover, by definition of $r$,
\begin{align*}
P &\left\{ \left(\log\frac{p}{q}\right)^2\one_{\lambda \leq \frac{q}{p}\leq 1} \right\}\\
 &\leq 8 P \left\{ \left(\frac{q^{1/2}}{p^{1/2}}-1\right)^2\one_{\lambda \leq \frac{q}{p}\leq 1} \right\}
 + 2P \left\{ r^2\left(\frac{q}{p}\right)\left(\frac{q^{1/2}}{p^{1/2}}-1\right)^4\one_{\lambda \leq \frac{q}{p}\leq 1} \right\}\\
 &\leq 8 P \left\{ \left(\frac{q^{1/2}}{p^{1/2}}-1\right)^2\one_{\lambda \leq \frac{q}{p}\leq 1} \right\}+2r^2(\lambda)P \left\{\left(\frac{q^{1/2}}{p^{1/2}}-1\right)^2\one_{\lambda \leq \frac{q}{p}\leq 1} \right\}\\
 &\leq 8 P \left\{ \left(\frac{q^{1/2}}{p^{1/2}}-1\right)^2\one_{\lambda \leq \frac{q}{p}\leq 1} \right\}+8 \left(\log\frac{1}{\lambda}\right)^2 P \left\{\left(\frac{q^{1/2}}{p^{1/2}}-1\right)^2\one_{\lambda \leq \frac{q}{p}\leq 1} \right\},
\end{align*}
where
\begin{itemize}
 \item the first inequality holds because for any $a,b \in \Rbb$, $(a+b)^2 \leq 2a^2+2b^2$,
 \item the second inequality holds because $r$ is decreasing and since $0\leq \frac{q}{p}\leq 1$, $\left(\frac{q^{1/2}}{p^{1/2}}-1\right)^2 \leq 1$,
 \item the third inequality holds because $r(x) \leq 2 \log(1/x)$ for $x \in (0, 1/2]$.
\end{itemize}
All in all,
\begin{align*}
P\left\{ \left(\log\frac{p}{q}\right)^2\one_{\lambda \leq \frac{q}{p}\leq \frac{1}{\lambda}} \right\}
 \leq 8 \left(1+\left(\log\frac{1}{\lambda}\right)^2\right) P \left\{ \left(\frac{q^{1/2}}{p^{1/2}}-1\right)^2\one_{\lambda \leq \frac{q}{p}\leq \frac{1}{\lambda}} \right\}.
\end{align*}

\end{document}